\newcommand{\norm}[1]{\left\lVert#1\right\rVert}
\newcommand{\uw}{{\bf{u_w}}}
\newcommand{\uwn}{\bf{u^n_w}}
\newcommand{\uwo}{{\bf{u^{n-1}_w}}}
\newcommand{\ez}{{\bf{e_z}}}
\newtheorem{remark}{Remark}
\newtheorem{theorem}{Theorem}
\newtheorem{proof}{Proof}
\newtheorem{acknowledgments}{Acknowledgments}
\begin{document}

\title{Iterative schemes for surfactant transport in porous media\thanks{Project founded by VISTA, a collaboration between the Norwegian Academy of Science and Letters and Equinor.}
}
\author[1]{Davide Illiano\thanks{davide.illiano@uib.no}}
\author[1,2]{Iuliu Sorin Pop\thanks{sorin.pop@uhasselt.be}}
\author[1]{Florin Adrian Radu\thanks{florin.radu@uib.no@university.edu}}
\affil[1]{Department of Mathematics, University of Bergen}
\affil[2]{Faculty of Sciences, University of Hasselt}

\renewcommand\Authands{ and }

\maketitle
\textbf{Abstract} In this work we consider the transport of a surfactant in variably saturated porous media. The water flow is modelled by the Richards equations and it is fully coupled with the transport equation for the surfactant. Three linearization techniques are discussed: the Newton method, the modified Picard and the L-scheme. Based on these, monolithic and splitting schemes are proposed and their convergence is analyzed. The performance of these schemes is illustrated on four numerical examples. For these examples, the number of iterations and the condition numbers of the linear systems emerging in each iteration are presented.

\textbf{Keywords} Richards equation, reactive transport, linearization schemes, L-scheme, modified Picard, Newton method, splitting solvers.


\section{Introduction}
\label{intro}
\paragraph{}
Many societal relevant problems are involving multiphase flow and multicomponent, reactive transport in porous media. Examples in this sense appear in the enhanced oil recovery, geological $CO_2$ storage, diffusion of medical agents into the human body, or water or soil pollution. \textcolor{blue}{In all these situations, experimental results are difficult and expensive to obtain, therefore numerical simulations become a key technology. Together with lab experiments and field data, they can help us comprehend these complex phenomena.} The mathematical models for problems as mentioned above are (fully or partially) coupled, non-linear, possible degenerate partial differential equations. In most cases, deriving explicit solutions is not possible, whereas developing appropriate algorithms for finding numerical solutions is a challenge in itself. Here we investigate robust and efficient methods for solving the nonlinear problems obtained after performing an implicit time discretization, the focus being on iterative, splitting or monolithic schemes for fully coupled flow and transport.
\par
Of particular interest here is a special case of multiphase, reactive flow in porous media, namely the surfactant transport in soil \cite{agosti,surfactant1999,prechtel,Karagunduz,surf,kumar2013}. Surfactants, which are usually organic compounds, are commonly used for actively combating soil and water pollution \cite{Gallo,raduAWR,christofi,suciu2014,dawson1998}. They contain both hydrophobic and hydrophilic groups and are dissolved in the water phase, being transported by diffusion and convection. Typically, the surfactants are employed in soil regions near the surface (vadose zone), where water and air are present in the pores. Consequently, the outcoming mathematical model accounts the transport of at least one species (the surfactant, but often also the contaminant) in a variably saturated porous medium. Whereas the dependence of the species transport on the flow is obvious, one can encounter the reverse dependence as well when surfactants are affecting the interfacial tension between water and air, leading to a dependency of the water flow on the concentration of surfactant. In other words, one has to cope with a fully coupled flow and transport problem, and not only with a one-way coupling, i.e. when only the transport depends on the flow, as mostly considered in reactive transport \cite{radu2010}. 
\par
Whereas the surfactant transport is described by a reaction-diffusion-convection equation, water flow in variably saturated porous media is modelled by the Richards equation \cite{RichardsBerardi,BookHelmig}.  The main assumption in this case is that the air remains in contact with the atmosphere, having a constant pressure (the atmospheric pressure, here assumed zero). This allows reducing the flow model to one equation, the Richards' equation. In mathematical terms, this equation is degenerate parabolic, whose solution has typically low regularity \cite{AltLuckhaus}. 

From the above, and adopting the pressure head as the main unknown in the Richards' equation, we study here different linearization schemes for the model 
\begin{equation}\label{richardscoupled}
\frac{\partial \theta(\Psi, c)}{\partial t} - \nabla \cdot (K(\theta(\Psi, c))\nabla(\Psi+z))\ =\ H_1 
\end{equation}
and 
\begin{equation}\label{transpcoupled}
\frac{\partial \theta(\Psi, c) c}{\partial t} - \nabla \cdot (D\nabla c - \uw c) + R(c)\ =\ H_2,
\end{equation}
holding for $\vec{x} \in \Omega$ ($z$ being the vertical coordinate of $\vec{x}$, pointing against gravity) and $t \in (0, T]$. Here $\Omega$ is a bounded, open domain in $\mathbb{R}^d$ ($d = 1, 2$ or $3$) having a Lipschitz continuous boundary $\partial \Omega$ and $T > 0$ is the final time.  Further, $\theta(\cdot, \cdot)$ denotes the water content, and is a given function depending on the pressure head $\Psi$ and of the surfactant concentration $c$. Also, $K(\cdot)$ is the hydraulic conductivity, $D>0$ the diffusion/dispersion coefficient. Finally, ${\uw}:= - K(\theta,c)\nabla(\Psi+z)$ is the water flux, $R(\cdot)$ the reaction term expressed as a function of the concentration $c$, and $H_1, H_2$ are the external sinks/sources. Initial and boundary conditions, which are specified below, complete the system.
\par
We point out that the water content and the hydraulic conductivity,  $\theta(\cdot, \cdot)$ and $K(\cdot)$ are given non-linear functions. They are medium- and surfactant-dependent and are determined experimentally (see \cite{BookHelmig}). Specific choices are provided in Section \ref{numericalrichards}. 
\par
To solve numerically the system (\ref{richardscoupled}) -- (\ref{transpcoupled}) one needs to discretize in time and space. We refer to \cite{Farthing} for a practical review of numerical methods for the Richards equation. Due to the low regularity of the solution and the need of relatively large time steps, the backward Euler method is the best candidate for the time discretization. Multiple spatial discretization techniques are available, such as the Galerkin Finite Element Method (\emph{FEM}) \cite{Nochetto,barrett1997,russell1983}, the Mixed Finite Element Method (\emph{MFEM}) \cite{arbogast1996,radu2004,vohralik,woodward}, the Multi-Point Flux Approximation (\emph{MPFA}) \cite{klausenMPFA,bause2010,aavatsmark} and the Finite Volume Method (\emph{FVM}) \cite{cances,eymard1999,eymard2006}. 

Since the time discretization is not explicit, the outcome is a sequence of non-linear problems, for which a linearization step has to be performed. Widely used linearization schemes are the quadratic, locally convergent Newton method and the modified Picard method \cite{celia}. For both, the convergence is guaranteed if the starting point is close to the solution. Since for evolution equations the initial guess is typically the solution at the previous time, this may induces severe restrictions on the time step size (see \cite{pop2}). Among alternative approaches we mention the L-scheme (see \cite{Pop98,Slodicka,Pop2004,List2016}) and the modified L-scheme \cite{Mitra}, both being robust w.r.t. the mesh size, but converging linearly. In particular, the L-scheme converges for any starting point, and the restriction on the time step, if any, is very mild. The modified L-scheme makes explicit use of the choice of the starting point as the solution obtained at the previous time, and has an improved convergence behaviour if the changes in the solutions at two successive times are controlled by the time step. Neverhteless, the modified L-scheme involves computation of derivatives while the L-scheme does not. Finally, the robustness of the Newton method is significantly increased if one considers combinations of the Picard and the Newton methods \cite{putti}, and in particular of the L-scheme and the Newton scheme \cite{List2016}. 
\par
We conclude this discussion by mentioning that in this paper we adopt the \emph{FEM} and the \emph{MPFA}, but the iterative schemes presented here can be applied in combination with any other spatial discretization. The focus is on effectively solving the flow and transport system (\ref{richardscoupled}) -- (\ref{transpcoupled}), and in particular on the adequate treating of the coupling between the two model components (the flow and the reactive transport). The schemes are divided in three main categories: monolithic (Mon), non-linear splitting (NonLinS) and alternate splitting (AltS). Subsequently, we denote e.g. by Mon-NE, the monolithic scheme obtained by applying the Newton method as linearization. The nonlinear splitting schemes (NonLinS) should be understood as solving at each time step first the flow equation until convergence, by using the surfactant concentration from the last iteration, and then with the obtained flow solving the transport equation until convergence. The procedure can be continued iteratively, this being the usual {\it or classical} splitting method for transport problems. The convergence of NonLinS does not depend on the linearization approach used for each model component (Newton, Picard or L-scheme), because we assume that the nonlinear subproblems are solved exactly, i.e. until convergence. Finally, the alternate splitting methods (AltS) have a different philosophy. Instead solving each subproblem until convergence within each iteration, one performs only one step of the chosen linearization. For example, AltS-NE will perform one Newton step for each model component, and iterate. These schemes are illustrated in Figures \ref{scheme2}, \ref{scheme3}. 

All the schemes can be analysed theoretically, and we do this exemplary for Mon-LS, i.e. for the monolithic approach combined with the L-scheme. Based on comparative numerical tests performed for academic and benchmark problems, we see that the alternate methods can save substantial computational time, while maintaining the robustness of the L-scheme.
\par
The remaining of the paper is organized as follows. In Section \ref{numericalrichards} we establish the mathematical model and the notation used and present the iterative monolithic and splitting schemes.  In Section \ref{sec:convergence} we prove the convergence of the $Mon-LS$ scheme and briefly discuss the convergence of the other schemes. Section \ref{numericalexample} presents four different numerical examples. They are inspired by the cases already studied in the literature \cite{List2016,surf}. 
Section \ref{conclusion} concludes this work.

\section{Problem formulation, discretization and iterative schemes}\label{numericalrichards}
\paragraph{}
We solve the fully coupled system (\ref{richardscoupled})--(\ref{transpcoupled}), completed by homogeneous Dirichlet boundary conditions for both $\Psi$ and $c$ and the initial conditions: 
$$\Psi = \Psi_0 \text{ and } c = c_0  \text{ at } t = 0.$$ 
We use the van Genuchten-Mualem parameterization \cite{vanG}
\begin{equation} \label{theta}
\theta(\Psi) = \begin{cases} \theta_r + (\theta_s - \theta_r) \left( \frac{1}{1+(-\alpha \Psi)^n} \right)^{\frac{n-1}{n}}, &\Psi \leq 0 \\
\theta_s, &\Psi>0,
\end{cases}
\end{equation}
\begin{equation} \label{K}
K(\theta(\Psi)) = \begin{cases} K_s \theta_e^{\frac{1}{2}} \left[1-\left(1-\theta_e^{\frac{n}{n-1}}\right)^{\frac{n-1}{n}} \right]^2, &\Psi \leq 0 \\
K_s, &\Psi>0,
\end{cases}
\end{equation}
where $\theta_r$ and $\theta_s$ represent the values of the residual and saturated water content, $ \theta_e = (\theta(Psi)-\theta_r)/(\theta_s-\theta_r)$ is the effective water content, $K_s$ is the conductivity and $\alpha$ and $n$ are model parameters depending on the soil. 
\par
Observe that in the expression above for $\theta$ the influence of the surfactant on the water flow is neglected. As reported  in \cite{salt,surf,laboratory},  the surface tension between water and air does depend on the surfactant concentration $c$, implying the same for the function $\theta$ above. The following parametrization is proposed in \cite{surf}
\begin{equation}\label{rescaled}
\theta(\Psi, c) := \theta\Big(\frac{\gamma(c)}{\gamma_0(c_0)}\Psi \Big), \qquad \text{ with } \qquad \frac{\gamma(c)}{\gamma_0(c_0)} = \frac{1}{1- b \log (c/a +1)}. 
\end{equation}
Here $\gamma$ and $\gamma_0$ are the surface tensions at concentrations $c$ and $c_0$, the second being a reference concentration. The parameters $a$ and $b$ depend on the fluid and the medium. We refer to \cite{smith1994,laboratory} for details about the scaling factor in (\ref{rescaled}).
\par
This gives the following expressions for $\theta$ and $K$
\begin{equation} \label{thetarescaled}
\theta(\Psi, c) = \begin{cases} \theta_r + (\theta_s - \theta_r) \left[ 1/ \Big(1+\big(-\alpha (\frac{1}{1- b \log (c/a +1)}) \Psi\big)^n\Big) \right]^{\frac{n-1}{n}}, &\Psi \leq 0 \\
\theta_s, &\Psi>0,
\end{cases}
\end{equation}
\begin{equation} \label{Krescaled}
K(\theta(\Psi,  c)) = \begin{cases} K_s \theta_e^{\frac{1}{2}} \left[1-\left(1-\theta_e^{\frac{n}{n-1}}\right)^{\frac{n-1}{n}} \right]^2, &\Psi \leq 0 \\
K_s, &\Psi>0.
\end{cases}
\end{equation}
This shows that the flow component also depends on the reactive transport, implying that the model is coupled in both directions.
\par
In the following we proceed by discretizing the equations (\ref{richardscoupled}) and (\ref{transpcoupled}) in time and space. We will use common notations in functional analysis. We denote by $L^2(\Omega)$ the space of real valued, squared integrable function defined on $\Omega$ and $H^1(\Omega)$ its subspace, containing the functions having also the first order derivatives in $L^2(\Omega)$. $H_0^1(\Omega)$ is the space of functions belonging to $H^1(\Omega)$ and vanishing on $\partial \Omega$. Further, we denote by $< \cdot, \cdot > $ the $L^2(\Omega)$ scalar product (and by $\norm{\cdot}$ the associated norm) or the pairing between $H1_0$ and its dual $H^{-1}$. Finally, by $L^2(0, T; X)$ we mean the Bochner space of functions taking values in the Banach-space $X$,  the extension to $H^1(0, T; X)$ being straightforward. 
\par
With this we state the weak formulation of the problem related to (\ref{richardscoupled}) -- (\ref{transpcoupled}):
\par
\textbf{Problem P}: Find $\Psi, c \in L^2(0, T; H_0^{1}(\Omega)) \cap H^1(0, T; H^{-1}(\Omega))$ such that
\begin{equation}\label{ric1}
< \partial_t \theta(\Psi, c), v_1 > +  <K(\theta(\Phi, c))\nabla (\Psi + z) , \nabla v_1> = <H_1, v_1>
\end{equation}
and 
\begin{align}\label{tra1}
< \partial_t (\theta(\Psi,  c) c), v_2 > +  <D\nabla c + \uw c, \nabla v_2> = <H_2, v_2>
\end{align}
hold for all $v_1,v_2 \in H_0^{1}(\Omega)$ and almost every $t \in (0, T]$. 
\par
We now combine the backward Euler method with linear Galerkin finite elements for the  discretization of Problem P. We let $N \in \mathbb{N}$ be a strictly positive natural number and the time step $\tau \ := \ T/N$. Correspondingly, the discrete times are $t_n\ :=\ n\tau\ (n\in\{0, 1, \dots, N\})$. Further, we let  $T_h$ be a regular decomposition of $\Omega$, $\overline{\Omega} = \underset{T \in T_h}{\cup} T$ into $d$-dimensional simplices, with $h$ denoting the mesh diameter. The finite element space $V_h \subset H_0^1(\Omega)$ is defined by
\begin{equation}
V_h := \{ v_h\in H_0^1(\Omega)\ s.t.\ v_{h|T} \in \mathbb{P}_1(T), \text{ for any } T\in T_h\},
\end{equation}
where $\mathbb{P}_1(T)$ denotes the space of linear polynomials on $T$ and $v_{h|T}$ the restriction of $v_{h}$ to $T$.
\par
For the fully discrete counterpart of Problem P we let $n \ge 1$ be fixed and assume that $\Psi^{n-1}_h, c^{n-1}_h \in V_h$ are given. The solution pair at time $t_n$ solves 
\par
\textbf{Problem P$_n$}: Find $\Psi^{n}_h, c^{n}_h \in V_h$ such that for all $v_h, w_h \in V_h$ there holds 
\begin{equation}\label{ric2}
\begin{split}
&<\theta(\Psi^{n}_h, c^{n}_h) - \theta(\Psi^{n-1}_h, c^{n-1}_h), v_h > \\+ \tau <K(&\theta(\Psi^{n}_h, c^{n}_h))
(\nabla(\Psi^{n}_h)+ {\ez}), \nabla v_h > 
= \tau <H_1, v_h >
\end{split}
\end{equation}
and 
\begin{equation}\label{tra2}
\begin{split}
&< \theta(\Psi^{n}_h, c^{n}_h) c^{n}_h   - \theta(\Psi^{n-1}_h, c^{n-1}_h) c^{n-1}_h, w_h> \\
+& \tau  <D \nabla c^n_h + \uwo c^{n}_h, \nabla w_h>  = \tau <H_2, w_h>.
\end{split}
\end{equation}
${\ez}$ denotes the unit vector in the direction opposite to gravity.
\par
\begin{remark} Observe that $\uwo$ appears in the convective term in \eqref{tra2}. This choice is made for the ease of presentation. Nevertheless, all calculations carried out in this paper were doubled by ones where $\uwn$ has replaced $\uwo$. The differences in the results were marginal.
\end{remark}
\par
Observe that Problem P$_n$ is a coupling system of two elliptic, nonlinear equations. In the following we discuss different iterative schemes for solving this system.

\subsection{Iterative linearization schemes}
\paragraph{}
We discuss monolithic and splitting approaches for solving Problem P$_n$, combined with either the Newton-method, or the modified Picard \cite{celia} or the L-scheme \cite{Pop2004,List2016}. In the following the index $n$ always refers to the time step, whereas $j$ denotes the iteration index. As a rule, the iterations start with the solution at the last time, $t_{n-1}$.

In the monolithic approach one solves the two equations of the system  \eqref{ric2}-\eqref{tra2} at once, and combined with a linearization method. Formally, this becomes 
\par
\textbf{Problem PMon$_{n, j+1}$}: Find  $\Psi^{n, j+1}$ and $c^{n, j+1}$ such that
\begin{equation}
\begin{cases} F^{lin}_1 (\Psi^{n, j+1},  c^{n, j+1}) = 0,\\
 F^{lin}_2 (\Psi^{n, j+1}, c^{n, j+1}) = 0.
\end{cases}
\end{equation} 
$F_k^{Lin}$ is a linearization of the expression $F_k$ ($k = 1, 2$) appearing in the system \eqref{ric2}-\eqref{tra2}. Depending on the used linearization technique, one speaks about a monolithic-Newton scheme (Mon-Newton), or monolithic-Picard (Mon-Picard) or monolithic-L-scheme (Mon-LS). These three schemes will be presented in detail below.

In the iterative splitting approach one solves each equation separately and then iterates between these using the results obtained previously. We distinguish between two main splitting ways: the nonlinear slitting and the alternate splitting. This is schematized in Figure \ref{scheme2} and Figure \ref{scheme3} respectively. The former becomes 
\par 
\textbf{Problem PNonLinS$_{n, j+1}$}: 
Find  $\Psi^{n, j+1}$ and $c^{n, j+1}$ such that
\begin{equation}
\begin{cases} F_1 (\Psi^{n, j+1}, c^{n,j}) = 0, \text{ followed by }\\
 F_2 (\Psi^{n, j+1}, c^{n, j+1}) = 0.
\end{cases}
\end{equation}
For the linearization of $F_1$ and $F_2$ one can use one of the three linearization techniques mentioned before. In contrast, in the alternate splitting one performs only one linearization step per iteration, see also Figure \ref{scheme3}. The alternate splitting scheme becomes 
\par
\textbf{Problem PAltS$_{n, j+1}$}: Find  $\Psi^{n, j+1}$ and $c^{n, j+1}$ such that
\begin{equation}
\begin{cases} F^{lin}_1 (\Psi^{n, j+1}, c^{n, j}) = 0, \text{ followed by } \\
 F^{lin}_2 (\Psi^{n, j+1}, c^{n, j+1}) = 0.
\end{cases}
\end{equation}
Depending on which linearization is used, one speaks about alternate splitting Newton (AltS-NE) or alternate splitting L-scheme (AltS-LS). Both schemes are presented in detail below.

\begin{figure}[H]
\centering
\includegraphics[scale=.4]{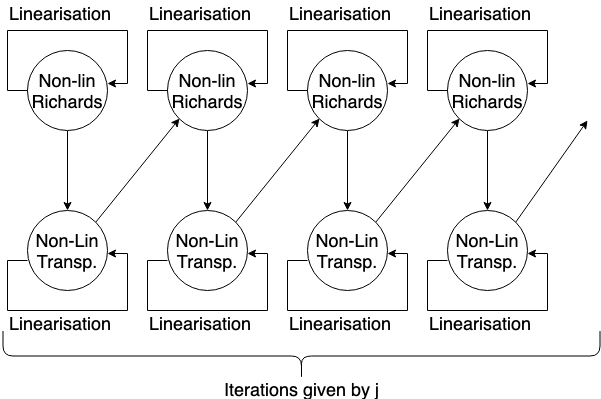}
\caption{The non-linear splitting approach}
\label{scheme2}
\end{figure}

\begin{figure}[H]
\centering
\includegraphics[scale=.4]{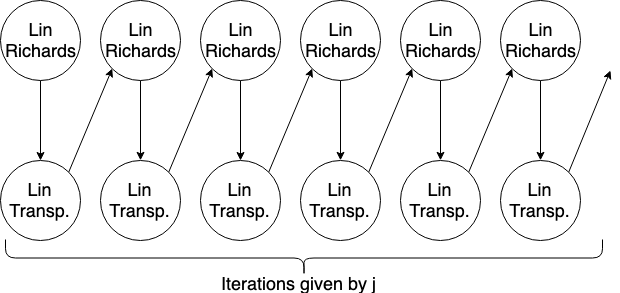}
\caption{The alternate splitting approach}
\label{scheme3}
\end{figure}



\subsubsection{The monolithic Newton method (Mon-Newton)}
\paragraph{}
We recall that the Newton scheme is quadratically, but only locally convergent. The monolithic Newton method applied to \eqref{ric2}-\eqref{tra2} gives 
\par
\textbf{Problem PMon-Newton$_{n, j+1}$}: 
Let $\Psi^{n-1}_h, c^{n-1},\Psi^{n,j}_h, c^{n,j}_h\in V_h$ be given, 
\\find $\Psi^{n,j+1}_h,c^{n,j+1}_h \in V_h$ such that for all $v_h, w_h \in V_h$ one has
\begin{equation}\label{Newtonrichards}
\begin{split}
<\theta(\Psi^{n,j}_h, c^{n,j}_h) - \theta(\Psi^{n-1}_h, c^{n-1}_h), v_h > 
 + <\frac{\partial \theta}{\partial \Psi}(\Psi^{n,j}_h, c^{n,j}_h)
(\Psi^{n,j+1}_h-\Psi^{n,j}_h),v_h>
\\
 +\tau <K(\theta(\Psi^{n,j}_h, c^{n,j}_h)) (\nabla(\Psi^{n,j+1}_h)
+ \ez), \nabla v_h > \\
+\tau <\frac{\partial K}{\partial \Psi}(\theta(\Psi^{n,j}_h, c^{n,j}_h)) (\nabla(\Psi^{n,j+1}_h)+\ez)
 (\Psi^{n,j+1}_h-\Psi^{n,j}_h), \nabla v_h >
 =\tau <H,v_h>
\end{split}
\end{equation}
and 
\begin{equation}\label{Newtontransp}
\begin{split}
&< \theta(\Psi^{n,j}_h, c^{n,j}_h)c^{n,j+1}_h - \theta(\Psi^{n-1}_h, c^{n-1}_h)c^{n-1}_h, w_h > \\
&+<\frac{\partial \theta}{\partial c}(\Psi^{n,j}_h,c^{n,j}_h) (c^{n,j+1}_h- c^{n,j}_h), v_h> \\
&+ \tau < D\nabla c^{n,j+1}_h + \uwo c^{n,j+1}_h, \nabla w_h>   \ =\ \tau <H_c,w_h> .
\end{split}
\end{equation}

\subsubsection{The monolithic Picard method (Mon-Picard)}
\paragraph{}
The modified Picard method was initially proposed by Celia \cite{celia} for the Richards equation. It is similar to the Newton method in dealing with the nonlinearity in the saturation, but not in the permeability. Being a modification of the Newton method, modified Picard method is only linearly convergent \cite{pop2}. The monolithic Picard method applied to \eqref{ric2}-\eqref{tra2} becomes 
\par
\textbf{Problem PMon-Picard$_{n, j+1}$}: 
Let $\Psi^{n-1}_h,c^{n-1}_h,\Psi^{n,j}_h,c^{n,j}_h\in V_h$ be given, 
\\find $\Psi^{n,j+1}_h,c^{n,j+1}_h\in V_h$ such that for all $v_h, w_h \in V_h$ one has
\begin{equation}\label{picardrichards}
\begin{split}
&<\theta(\Psi^{n,j}_h,c^{n,j}_h) - \theta(\Psi^{n-1}_h,c^{n-1}_h), v_h > \\
&+ <\frac{\partial \theta}{\partial \Psi}(\Psi^{n,j}_h,c^{n,j}_h)
(\Psi^{n,j+1}_h-\Psi^{n,j}_h),v_h>\\
+ \tau <K(\theta(& \Psi^{n,j}_h,c^{n,j}_h)) (\nabla(\Psi^{n,j+1}_h) + \ez),\nabla v_h > \ =\ \tau <H,v_h >
\end{split}
\end{equation}
and
\begin{equation}\label{picardtransp}
\begin{split}
&< \theta(\Psi^{n,j}_h,c^{n,j}_h)c^{n,j+1}_h - \theta(\Psi^{n-1}_h,c^{n-1}_h)c^{n-1}_h, w_h > \\
& +  < \frac{\partial \theta}{\partial c}(\Psi^{n,j}_h,c^{n,j}_h) (c^{n,j+1}_h-c^{n,j}_h), w_h> \\
+ \tau < D\nabla & c^{n,j+1}_h + \uwo c^{n,j+1}_h, \nabla w_h> \ =\ \tau <H_c, w_h>.
\end{split}
\end{equation}

The equations (\ref{picardrichards}) and (\ref{picardtransp}) have been expressed as functions of only the unknown pressure $\Psi_h^{n,j+1}$ and concentration $c^{n,j+1}_h$, respectively. To achieve this, in the former we used only the derivative of $\theta$ with respect to $\Psi$ and only the derivative of $\theta$ with respect to $c$ in the latter.

Alternatively, both of the partial derivatives can be involved in the formulation,  
\begin{equation}
\begin{split}
\theta(\Psi^{n,j+1}_h, c^{n,j+1}_h) \rightarrow \theta(\Psi^{n,j}_h,c^{n,j}_h) &+ \Big(\frac{\partial \theta}{\partial \Psi}\Big)(\Psi^{n,j}_h, c^{n,j}_h)(\Psi^{n,j+1}_h - \Psi^{n,j}_h) \\
&+ \Big(\frac{\partial \theta}{\partial c}\Big)(\Psi^{n,j}_h, c^{n,j}_h)(c^{n,j+1}_h- c^{n,j}_h).
\end{split}
\end{equation}

\subsubsection{The monolithic L-scheme (Mon-LS)}
\paragraph{}
The monolithic L-scheme for solving \eqref{ric2}--\eqref{tra2} becomes 
\par
\textbf{Problem PMon-LS$_{n, j+1}$}: 
Let $\Psi^{n-1}_h,\Psi^{n,j}_h,c^{n-1}_h,c^{n,j}_h\in V_h$ be given and 
\\with $L_1, L_2>0$ large enough (as specified below), find $\Psi^{n,j+1}_h,c^{n,j+1}_h\in V_h$ s.t. for all $v_h, w_h \in V_h$ 
\begin{equation}\label{Lrichards}
\begin{split}
&<\theta(\Psi^{n,j}_h,c^{n,j}_h) - \theta(\Psi^{n-1}_h,c^{n-1}_h), v_h > + L_1<\Psi^{n,j+1}_h-\Psi^{n,j}_h,v_h> \\
&\tau <K(\theta(\Psi^{n,j}_h,c^{n,j}_h)) (\nabla(\Psi^{n,j+1}_h)+\ez),\nabla v_h > = \tau <H,v_h >, \qquad
\end{split}
\end{equation}
\begin{equation}\label{Ltransp}
\begin{split}
< \theta(\Psi^{n,j}_h,c^{n,j}_h)c^{n,j+1}_h - & \theta(\Psi^{n-1}_h,c^{n-1}_h)c^{n-1}_h, w_h > \\
+  L_2<c^{n,j+1}_h-c^{n,j}_h, & w_h> +\tau< D\nabla c^{n,j+1}_h 
+ \uwo c^{n,j+1}_h, \nabla w_h> \\
& =\tau<H_c,w_h> .
\end{split}
\end{equation}
$L_1$ and $L_2$ are free to be chosen parameters but should be large enough to ensure the convergence of the scheme, see Sec. \ref{sec:convergence}. In practice, the values of $L_1, L_2$ are connected to $ \displaystyle\max_\Psi \norm{\frac{\partial \theta}{\partial \Psi}} $, $\displaystyle\max_c \norm{\frac{\partial \theta}{\partial c}}$.


The L-scheme does not involve the computations of derivatives, and the linear systems to be solved within each iteration are better conditioned compared to the ones given by Newton or Picard method (see \cite{List2016}). Moreover, this scheme is (linearly) convergent for any initial guess for the iteration.


\subsubsection{The non-linear splitting approach (NonLinS)}
\paragraph{}
The non-linear splitting approach for solving \eqref{ric2}--\eqref{tra2} becomes 
\par
\textbf{Problem PNonLinS$_{n, j+1}$}: 
Let $\Psi^{n-1}_h, c^{n-1},\Psi^{n,j}_h, c^{n,j}_h\in V_h$ be given, find $\Psi^{n,j+1}_h \in V_h$ s.t. 
\begin{equation}\label{NonLinrichards}
\begin{split}
<\theta(\Psi^{n,j+1}_h, &c^{n,j}_h) - \theta(\Psi^{n-1}_h, c^{n-1}_h), v_h >  \\
+\tau <K(\theta(\Psi^{n,j}_h, c^{n,j}_h)&) (\nabla(\Psi^{n,j+1}_h)+ \ez), \nabla v_h >  
= \tau <H,v_h >
\end{split}
\end{equation}
holds true for all $v_h \in V_h$. Then, with $\Psi^{n,j+1}_h$ obtained, find $c^{n,j+1}_h \in V_h$ such that for all $w_h \in V_h$ it holds 
\begin{equation}\label{NonLintransp}
\begin{split}
< \theta(\Psi^{n,j+1}_h, &c^{n,j+1}_h)c^{n,j+1}_h - \theta(\Psi^{n-1}_h, c^{n-1}_h)c^{n-1}_h, w_h > 
+ \tau < D\nabla c^{n,j+1}_h \\
 &+\uwo c^{n,j+1}_h, \nabla w_h>   \ =\ \tau <H_c,w_h> .
\end{split}
\end{equation}
As for the monolithic schemes, one can apply the different linear iterative schemes to obtain fully linear versions of the splitting approach. This is done first to solve \eqref{NonLinrichards} and, once a solution to \eqref{NonLinrichards} is available, this is employed in the linearization of \eqref{NonLintransp}. 

\subsubsection{The alternate Newton method (AltS-Newton)}
\paragraph{}
In the alternate Newton method applied to \eqref{ric2}-\eqref{tra2} one solves 
\par
\textbf{Problem PAltS-Newton$_{n, j+1}$}: 
Let $\Psi^{n-1}_h, c^{n-1},\Psi^{n,j}_h, c^{n,j}_h\in V_h$ be given, 
\\find $\Psi^{n,j+1}_h \in V_h$ s.t. 
\begin{equation}\label{AltNewtrichards}
\begin{split}
&<\theta(\Psi^{n,j}_h, c^{n,j}_h) - \theta(\Psi^{n-1}_h, c^{n-1}_h), v_h > \\
+< \theta'(\Psi^{n,j}_h, c^{n,j}_h) (& \Psi^{n,j+1}_h-\Psi^{n,j}_h), v_h> 
+\tau <K(\theta(\Psi^{n,j}_h, c^{n,j}_h)) (\nabla(\Psi^{n,j+1}_h)\\
& + \ez), \nabla v_h >  
+\tau <\frac{\partial K}{\partial \Psi}(\theta(\Psi^{n,j}_h, c^{n,j}_h)) (\nabla(\Psi^{n,j+1}_h)\\
& +\ez)(\Psi^{n,j+1}_h-\Psi^{n,j}_h), \nabla v_h >
 = \tau <H,v_h >
\end{split}
\end{equation}
holds true for all $v_h \in V_h$. Then,  with $\Psi^{n,j+1}_h$ obtained above, find $c^{n,j+1}_h \in V_h$ such that for all $w_h \in V_h$ one has 
\begin{equation}\label{AltNewttransp}
\begin{split}
< \theta(\Psi^{n,j+1}_h, &c^{n,j}_h)c^{n,j+1}_h - \theta(\Psi^{n-1}_h, c^{n-1}_h)c^{n-1}_h, w_h > \\
+< \frac{\partial \theta}{\partial c}(\Psi^{n,j+1}_h, &c^{n,j}_h)(c^{n,j+1}_h- c^{n,j}_h), v_h>  + \tau < D\nabla c^{n,j+1}_h \\
& +\uwo c^{n,j+1}_h, \nabla w_h>   \ =\ \tau <H_c,w_h> .
\end{split}
\end{equation}

\subsubsection{The alternate Picard method (AltS-Picard)}
\paragraph{}
The alternate Picard method applied to \eqref{ric2}-\eqref{tra2} becomes 
\par
\textbf{Problem PAltS-Picard$_{n, j+1}$}: 
Let $\Psi^{n-1}_h, c^{n-1},\Psi^{n,j}_h, c^{n,j}_h\in V_h$ be given, 
\\find $\Psi^{n,j+1}_h \in V_h$ s.t. 
\begin{equation}\label{Altpicardrichards}
\begin{split}
&<\theta(\Psi^{n,j}_h,c^{n,j}_h) - \theta(\Psi^{n-1}_h,c^{n-1}_h), v_h > \\
&+ <\frac{\partial \theta}{\partial \Psi}(\Psi^{n,j}_h,c^{n,j}_h)(\Psi^{n,j+1}_h-\Psi^{n,j}_h),v_h>\\
 + \tau <K(\theta(\Psi^{n,j}_h,& c^{n,j}_h)) (\nabla(\Psi^{n,j+1}_h) + \ez),\nabla v_h > \ =\ \tau <H,v_h >
\end{split}
\end{equation}
hold true for all $v_h \in V_h$. Then,  with $\Psi^{n,j+1}_h$ obtained above, find $c^{n,j+1}_h \in V_h$ such that for all $w_h \in V_h$ one has 
\begin{equation}\label{Altpicardtransp}
\begin{split}
< \theta(\Psi^{n,j+1}_h,&c^{n,j}_h)c^{n,j+1}_h - \theta(\Psi^{n-1}_h,c^{n-1}_h)c^{n-1}_h, w_h > \\
&+  <\frac{\partial \theta}{\partial c}(\Psi^{n,j+1}_h,c^{n,j}_h)(c^{n,j+1}_h-c^{n,j}_h),w_h> \\
+ \tau < D\nabla &c^{n,j+1}_h + \uwo c^{n,j+1}_h, \nabla w_h> \ =\ \tau <H_c, w_h>.
\end{split}
\end{equation}

\subsubsection{The alternate L-scheme (AltS-LS)}
\paragraph{}
The alternate L-scheme for solving (\ref{ric1}--\ref{tra1}) becomes 
\par
\textbf{Problem PAltS-LS$_{n, j+1}$}: 
Let $\Psi^{n-1}_h, c^{n-1},\Psi^{n,j}_h, c^{n,j}_h\in V_h$ be given, find $\Psi^{n,j+1}_h \in V_h$ s.t. 
\begin{equation}\label{Lrichards}
\begin{split}
&<\theta(\Psi^{n,j}_h,c^{n,j}_h) - \theta(\Psi^{n-1}_h,c^{n-1}_h), v_h > + L_1<\Psi^{n,j+1}_h-\Psi^{n,j}_h,v_h> \\
&\tau <K(\theta(\Psi^{n,j}_h,c^{n,j}_h)) (\nabla(\Psi^{n,j+1}_h)+\ez),\nabla v_h >\ = \tau <H,v_h >
\end{split}
\end{equation}
hold true for all $v_h \in V_h$. Then,  with $\Psi^{n,j+1}_h$ obtained above, find $c^{n,j+1}_h \in V_h$ such that for all $w_h \in V_h$ one has 
\begin{equation}\label{Ltransp}
\begin{split}
< \theta(\Psi^{n,j+1}_h, c^{n,j}_h)& c^{n,j+1}_h - \theta(\Psi^{n-1}_h,c^{n-1}_h)c^{n-1}_h, w_h >\\
 +  L_2<c^{n,j+1}_h-&c^{n,j}_h,w_h> +\tau< D\nabla c 
+ \uwo c^{n,j+1}_h, \nabla w_h> \\
& =\ \tau <H_c,w_h> . 
\end{split}
\end{equation}

\begin{remark} (Stopping criterion) For both monolithic and splitting schemes, one stops the iteration process whenever 
$$ 
\norm{\Psi^{n,j+1}_h-\Psi^{n,j}_h} \leq  \epsilon_1,  \text{ and }
\norm{c^{n,j+1}_h-c^{n,j}_h} \leq \epsilon_2,
$$
where $\epsilon_1, \epsilon_2$ are small numbers. Here we took $10^{-07}$ or $10^{-08}$.
\end{remark}

\section{Convergence analysis}\label{sec:convergence}
\paragraph{}
In this section we analyse the convergence of the monolithic L-scheme introduced through Problem PMon-LS$_{n, j+1}$. We restrict the analysis to this iteration, but mention that the convergence analysis for the other (monolithic or splitting) schemes introduced above, can be done in a similar fashion. We start by defining the errors 
\begin{equation}
e_{\Psi}^{j+1}\ :=\Psi^{n,j+1}_h - \Psi^{n,j}_h \ \text{and} \  e_{c}^{j+1}\ :=c^{n,j+1}_h - c^{n,j}_h ,
\end{equation}
obtained at iteration $j+1$. The scheme is convergent if both errors vanish when $j\rightarrow\infty$.

The convergence is obtained under the following assumptions: 
\begin{itemize}
\item[(A1)]{There exist $\alpha_\Psi > 0$ and $\alpha_c \ge 0$ such that 
for any $\Psi_1, \Psi_2 \in \mathbb{R}$  and $c_1, c_2 \in \mathbb{R}_+$ 
\begin{equation} \label{assumption_theta}
\begin{split}
	<\theta(\Psi_1, c_1)  -  & \theta(\Psi_2, c_2), \Psi_1 - \Psi_2> + 
	<c_1 \theta(\Psi_1, c_1)  - c_2 \theta(\Psi_2, c_2),  c_1 - c_2> \, \\
	& \ge \alpha_\Psi \norm{\theta(\Psi_1, c_1)  -  \theta(\Psi_2, c_2)}^2
	+ \alpha_c \norm{\Psi_1 - \Psi_2}^2.
\end{split}
\end{equation}
Furthermore, there exist two constants $\theta_m \ge 0$ and $\theta_M < \infty$ such that $\theta_m \leq \theta \leq\theta_M$.}

\item[(A2)]{The function $K(\theta(\cdot , \cdot ))$ is Lipschitz continuous, with respect to both variables, and there exist two constants $K_m$ and $K_M$ such that $0\leq K_m\leq K\leq K_M<\infty$.}
\item[(A3)]{There exist $M_u, M_\Psi, M_c \ge 0$  such that \\$\norm{\uwn}_{L^\infty} \le M_u$, $\norm{\nabla \Psi^{n}}_{L^\infty}\leq M_\Psi $ and $\norm{c^{n}}_{L^\infty} \le M_c$  for all $n \in \mathbb{N}$.}
\end{itemize}

\begin{remark} (A2) is satisfied in most realistic situations. (A3) is a pure technical one, being satisfied when data is sufficiently regular, which is assumed to be the case for the present analysis. The inequality \eqref{assumption_theta} in (A1) is a coercivity assumption. It is in particular satisfied if $\theta$ only depends on $\Psi$, and for common relationships $\theta -- \Psi$ encountered in the engineering literature.
\end{remark}

\begin{theorem}
Let $n\ \in \ \{1,2, \dots N\}$ be given and assume (A1)-(A3) be satisfied. If the time step is small enough (see \eqref{conditions_tau} below), the monolithic L-scheme  in (\ref{Lrichards}) and (\ref{Ltransp}) is linearly convergent for any $L_1$ and $L_2$ satisfying \eqref{Lcondtions}. 
\end{theorem}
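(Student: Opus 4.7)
The aim is to derive a contraction inequality for the iteration errors $e_\Psi^{j+1}, e_c^{j+1}$, from which linear convergence in $j$ follows. Throughout, set $\theta^{n,j}:=\theta(\Psi^{n,j}_h,c^{n,j}_h)$ and $K^{n,j}:=K(\theta^{n,j})$. The first step is to form error equations by subtracting \eqref{Lrichards} at iteration index $j$ from its counterpart at $j+1$, and likewise for \eqref{Ltransp}. Using the identities $\theta^{n,j}c^{n,j+1}-\theta^{n,j-1}c^{n,j}=\theta^{n,j}e_c^{j+1}+(\theta^{n,j}-\theta^{n,j-1})c^{n,j}$ and $K^{n,j}\nabla\Psi^{n,j+1}-K^{n,j-1}\nabla\Psi^{n,j}=K^{n,j}\nabla e_\Psi^{j+1}+(K^{n,j}-K^{n,j-1})\nabla\Psi^{n,j}$, I would then test with $v_h=e_\Psi^{j+1}$ and $w_h=e_c^{j+1}$. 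This isolates the leading coercive terms $L_1\norm{e_\Psi^{j+1}}^2+\tau K_m\norm{\nabla e_\Psi^{j+1}}^2$ (via the lower bound $K\ge K_m$ from A2) and $(L_2+\theta_m)\norm{e_c^{j+1}}^2+\tau D\norm{\nabla e_c^{j+1}}^2$ (via $\theta\ge\theta_m$ from A1) on the left-hand sides, with all other contributions treated as perturbations.

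The routine remainders are then handled by standard inequalities. The cross-iteration contributions $L_1<e_\Psi^j,e_\Psi^{j+1}>$ and $L_2<e_c^j,e_c^{j+1}>$ are split by Young's inequality; the drift $\tau<(K^{n,j}-K^{n,j-1})(\nabla\Psi^{n,j}+\ez),\nabla e_\Psi^{j+1}>$ is controlled using the Lipschitz continuity of $K(\theta(\cdot,\cdot))$ from A2 together with the bound $\norm{\nabla\Psi^{n,j}}_{L^\infty}\le M_\Psi$ from A3, its gradient factor being absorbed into a fraction of $\tau K_m\norm{\nabla e_\Psi^{j+1}}^2$; the convective term $\tau<\uwo e_c^{j+1},\nabla e_c^{j+1}>$ is absorbed analogously into $\tau D\norm{\nabla e_c^{j+1}}^2$ using $\norm{\uwo}_{L^\infty}\le M_u$. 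This leaves the genuinely coupled terms $<\theta^{n,j}-\theta^{n,j-1},e_\Psi^{j+1}>$ and $<(\theta^{n,j}-\theta^{n,j-1})c^{n,j},e_c^{j+1}>$, which I would reduce, via Cauchy--Schwarz and Young and using $|c^{n,j}|\le M_c$, to expressions in $\norm{\theta^{n,j}-\theta^{n,j-1}}^2$ plus fractions of $\norm{e_\Psi^{j+1}}^2$ and $\norm{e_c^{j+1}}^2$.

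The main obstacle, and the reason A1 is postulated, is to bound $\norm{\theta^{n,j}-\theta^{n,j-1}}^2$ purely in terms of the previous-iteration errors. For this I would apply \eqref{assumption_theta} to the pair $((\Psi^{n,j}_h,c^{n,j}_h),(\Psi^{n,j-1}_h,c^{n,j-1}_h))$, obtaining
\begin{equation*}
\alpha_\Psi\norm{\theta^{n,j}-\theta^{n,j-1}}^2\le <\theta^{n,j}-\theta^{n,j-1},e_\Psi^j>+<c^{n,j}\theta^{n,j}-c^{n,j-1}\theta^{n,j-1},e_c^j>.
\end{equation*}
Decomposing $c^{n,j}\theta^{n,j}-c^{n,j-1}\theta^{n,j-1}=c^{n,j}(\theta^{n,j}-\theta^{n,j-1})+\theta^{n,j-1}e_c^j$, using A3 ($|c^{n,j}|\le M_c$) and the $\theta\le\theta_M$ bound from A1, and using two Young steps to absorb $\norm{\theta^{n,j}-\theta^{n,j-1}}^2$ on the left, gives $\norm{\theta^{n,j}-\theta^{n,j-1}}^2\le C_\ast(\norm{e_\Psi^j}^2+\norm{e_c^j}^2)$ with $C_\ast$ depending only on $\alpha_\Psi,M_c,\theta_M$.

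Summing the two tested error identities and inserting this bound produces, after collecting constants, an inequality of the form
\begin{equation*}
\mu_\Psi\norm{e_\Psi^{j+1}}^2+\mu_c\norm{e_c^{j+1}}^2+\tfrac{\tau K_m}{2}\norm{\nabla e_\Psi^{j+1}}^2+\tfrac{\tau D}{2}\norm{\nabla e_c^{j+1}}^2\le\nu_\Psi\norm{e_\Psi^j}^2+\nu_c\norm{e_c^j}^2,
\end{equation*}
where $\mu_k,\nu_k$ are explicit in $L_1,L_2,\tau$ and the data. The conditions \eqref{Lcondtions} and \eqref{conditions_tau} referenced in the statement then arise by requiring $L_1,L_2$ large enough (relative to $C_\ast$) so that $\nu_\Psi/\mu_\Psi<1$ and $\nu_c/\mu_c<1$, and $\tau$ small enough to keep the $M_\Psi$- and $M_u$-dependent terms subdominant to the coercive $\tau K_m$ and $\tau D$ contributions; taking the maximum of these two ratios yields a single contraction factor $\gamma\in(0,1)$ and hence linear convergence of $\norm{e_\Psi^j}^2+\norm{e_c^j}^2$ to zero.
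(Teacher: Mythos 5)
Your overall architecture (error equations, testing with the current error, Young/Cauchy--Schwarz, Poincar\'e, and reading off conditions on $L_1,L_2,\tau$ at the end) matches the paper's, and your auxiliary estimate $\norm{\theta^{n,j}-\theta^{n,j-1}}^2\le C_\ast\bigl(\norm{e_\Psi^j}^2+\norm{e_c^j}^2\bigr)$ is a correct consequence of (A1), (A3) and $\theta\le\theta_M$. One deviation is minor: the paper's errors are the distances to the solution of Problem P$_n$ (it subtracts \eqref{ric2} and \eqref{tra2} from \eqref{Lrichards} and \eqref{Ltransp}; the displayed definition of $e_\Psi^{j+1}$ there is a typo), whereas you compare successive iterates. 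Your route is viable in principle, but it additionally owes an argument that the Cauchy limit actually solves the nonlinear discrete problem.

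The real gap is in how you treat the coupled term $<\theta^{n,j}-\theta^{n,j-1},e_\Psi^{j+1}>$, and it breaks the contraction in exactly the regime the theorem addresses. Bounding it by $\tfrac{\delta}{2}\norm{e_\Psi^{j+1}}^2+\tfrac{1}{2\delta}\norm{\theta^{n,j}-\theta^{n,j-1}}^2$ and then inserting your bound by $C_\ast$ makes the right-hand coefficient of $\norm{e_\Psi^{j}}^2$ equal to $\tfrac{L_1}{2}+\tfrac{C_\ast}{2\delta}+O(\tau)$, while the left-hand coefficient of $\norm{e_\Psi^{j+1}}^2$ is at most $\tfrac{L_1}{2}-\tfrac{\delta}{2}+\tfrac{\tau K_m}{2C_\Omega}$. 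Since $\tfrac{C_\ast}{2\delta}+\tfrac{\delta}{2}\ge\sqrt{C_\ast}$ for every $\delta>0$ and the only compensating gain is $O(\tau)$, you get $\nu_\Psi>\mu_\Psi$ for all small $\tau$ and for every $L_1$ (which enters both sides identically), so ``$L_1$ large relative to $C_\ast$'' cannot make $\nu_\Psi/\mu_\Psi<1$; nor can the surplus be traded against the $c$-component. The paper avoids this by never converting $\norm{\theta^{n,j}-\theta^{n}}^2$ into previous-iteration errors: it uses $<e^{j+1}-e^j,e^{j+1}>=\tfrac12\norm{e^{j+1}}^2+\tfrac12\norm{e^{j+1}-e^j}^2-\tfrac12\norm{e^j}^2$ so that $\tfrac{L_1}{2}\norm{e^{j+1}-e^j}^2$ stays on the left, splits $<\theta^{n,j}-\theta^{n},e_\Psi^{j+1}>$ into $<\theta^{n,j}-\theta^{n},e_\Psi^{j}>$ plus $<\theta^{n,j}-\theta^{n},e_\Psi^{j+1}-e_\Psi^{j}>$, keeps the first pairing (together with its transport analogue) on the left where \eqref{assumption_theta} turns it into $\alpha_\Psi\norm{\theta^{n,j}-\theta^{n}}^2$, and absorbs the Young remainders $\tfrac{\delta_0}{2}\norm{\theta^{n,j}-\theta^{n}}^2$ and $\tfrac{1}{2\delta_0}\norm{e^{j+1}-e^j}^2$ into those two left-hand terms --- this is exactly where $L_1\ge 2/\alpha_\Psi$ in \eqref{Lcondtions} and the first inequality of \eqref{conditions_tau} come from. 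The right-hand side is then left with exactly $\tfrac{L_1}{2}\norm{e_\Psi^{j}}^2$, and the factor $L_1/(L_1+\tau K_m/C_\Omega)<1$ survives. You need this bookkeeping (which works just as well for successive differences) for the proof to close.
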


\begin{proof}
We follow the ideas in \cite{Pop2004,List2016} and start by subtracting (\ref{ric2}) from (\ref{Lrichards}) to obtain the error equation
\begin{align}
\begin{split}
<\theta^{n,j}_h-& \theta^{n}_h,v_h> + L_1<\Psi^{n,j+1}_h-\Psi^{n,j}_h,v_h> \\
+ \tau <K^{n,j}_h\nabla e_{\Psi}^{n,j+1},\nabla v_h>  + & \tau <(K^{n,j}
-K^{n})\nabla \Psi^{n,j+1}_h,\nabla v_h> \\
+ \tau & <(K^{n,j}-K^{n})\ez, \nabla v_h> = 0.
\end{split}
\end{align}
Testing now the above equation with $v_h = e_{\Psi}^{j+1}$, one obtains
\begin{align}
\begin{split}
& <\theta^{n,j}_h-\theta^{n}_h,e_{\Psi}^{j+1}> + L_1<e_{\Psi}^{j+1}-e_{\Psi}^{j},e_{\Psi}^{j+1}> \\
+ \tau <K^{n,j}\nabla e_{\Psi}^{n,j+1},& \nabla e_{\Psi}^{j+1}>  + \tau <(K^{n,j}_h
-K^{n}_h)\nabla \Psi^{n,j+1}_h,\nabla e_{\Psi}^{j+1}> \\
& + \tau <(K^{n,j}_h-K^{n}_h)\ez,\nabla e_{\Psi}^{j+1}> = 0.
\end{split}
\end{align}
By (A2) and after some algebraic manipulations we further get
\begin{align}\label{p1}
\begin{split}
& <\theta^{n,j}_h-\theta^{n}_h,e_{\Psi}^{j}  > +  \frac{L_1}{2}\norm{e_{\Psi}^{j+1}}^2 + \frac{L_1}{2}\norm{e_{\Psi}^{j+1} - e_{\Psi}^{j}}^2 \\
& + \tau K_m \norm{\nabla e_{\Psi}^{j+1}}^2 
  \leq \frac{L_1}{2}\norm{e_{\Psi}^{j}}^2 - <\theta^{n,j}_h-\theta^{n}_h,e_{\Psi}^{j+1} - e_{\Psi}^{j}> \\
  - \tau <(& K^{n,j}_h- K^{n}_h )\nabla \Psi^{n,j+1}_h,\nabla e_{\Psi}^{j+1}> 
 - \tau <(K^{n,j}_h -K^{n}_h)\ez ,\nabla e_{\Psi}^{j+1}>.
\end{split}
\end{align}
Using now (A1), (A3), the Lipschitz continuity of $K$ and twice the Young and Cauchy-Schwarz inequalities, for any $\delta_0 > 0$ and $\delta_1 > 0$, from \eqref{p1} one obtains 
\begin{align}\label{richardsfinal}
\begin{split}
& <\theta^{n,j}_h-\theta^{n}_h,e_{\Psi}^{j}> + \frac{L_1}{2}\norm{e_{\Psi}^{j+1}}^2 + \frac{L_1}{2} \norm{e_{\Psi}^{j+1} - e_{\Psi}^{j}}^2 \\
+  \tau K_m \norm{\nabla e_{\Psi}^{j+1}}^2 & \leq \frac{L_1}{2}\norm{e_{\Psi}^{j}}^2 
+ \frac{\delta_0}{2} \norm{ \theta^{n,j}_h - \theta^{n}_h }^2 + \frac{1}{2 \delta_0}\norm{e_{\Psi}^{j+1} - e_{\Psi}^{j}}^2 \\
 & + \frac{\tau (M^2_\Psi+1)L_k^2}{2\delta_1}  \norm{ \theta^{n,j}_h - \theta^{n}_h }^2 +  \tau \delta_1\norm{\nabla e_{\Psi}^{j+1}}^2. 
\end{split}
\end{align}
Similarly, subtracting (\ref{tra2}) from (\ref{Ltransp}) and choosing $w_h = e_{c}^{j+1}$ in the resulting one gets
\begin{equation}
\begin{split}
<c^{n,j+1}_h\theta^{n,j} _h - c^{n}_h\theta^{n}_h, e_{c}^{j+1}>  + L_2<e_{c}^{j+1}-e_{c}^{j},e_{c}^{j+1}> \\
+ \tau <  D\nabla e_{c}^{j+1}+ \uwo e_{c}^{j+1}, \nabla e_{c}^{j+1} > =\ 0.
\end{split}
\end{equation}
This can be rewritten as
\begin{align}\label{eq_proof_1}
\begin{split}
<c^{n,j}_h\theta^{n,j}_h  - c^{n}\theta^{n}_h, e_{c}^{j}>  + <\theta^{n,j}_h e_{c}^{j+1}, e_{c}^{j+1}>  + \frac{L_2}{2}\norm{e_{c}^{j+1}}^2 \\
 + \frac{L_2}{2}\norm{e_{c}^{j+1} - e_{c}^{j}}^2 + \tau D <\nabla e_{c}^{j+1} , \nabla e_{c}^{j+1}>  =\ \frac{L_2}{2}\norm{e_{c}^{j}}^2 \\
 + <\theta^{n}_h c^{n}_h - \theta^{n,j}_h c^{n, j}_h, e_{c}^{j+1} - e_{c}^{j}> - \tau  < \uwo e_{c}^{j+1}, \nabla e_{c}^{j+1} >.
 \end{split}
\end{align}
Using again (A1), (A3)  and the Cauchy-Schwarz and Young inequalities, from \eqref{eq_proof_1} it follows that for any $\delta_2,\delta_3,  \delta_4 > 0$ one has
\begin{align}\label{transpfinal}
\begin{split}
<c^{n,j}_h\theta^{n,j}_h  - c^{n}_h\theta^{n}_h, e_{c}^{j}>  + \theta_m \norm{e_{c}^{j+1}}^2
 + \frac{L_2}{2}\norm{e_{c}^{j+1}}^2 + \frac{L_2}{2}\norm{e_{c}^{j+1} - e_{c}^{j}}^2 \\
 + \tau D \norm{\nabla e_{c}^{j+1}}^2 
   \le \ \dfrac{L_2}{2}\norm{e_{c}^{j}}^2 + 
 \dfrac{\delta_2}{2} \norm{\theta^{n}_h - \theta^{n,j}_h}^2 + \dfrac{\delta_3}{2} \norm{e_{c}^{j}}^2 \\ 
 +(\dfrac{M_c^2}{ 2 \delta_2}  + \dfrac{\theta_M^2}{ 2 \delta_3}) \norm{e_{c}^{j+1} - e_{c}^{j}}^2  + \tau  \dfrac{M_u^2}{ 2 \delta_4} \norm{e_{c}^{j+1}}^2 +  \tau \dfrac{\delta_4}{2} \norm{\nabla e_{c}^{j+1}}^2.
  \end{split}
  \end{align}
Summing adding (\ref{richardsfinal}) to (\ref{transpfinal}) and using (A1) one gets
\begin{align}\label{eq_proof_2}
\begin{split}
 \alpha_\Psi   \norm{\theta^{n}_h - \theta^{n,j}_h}^2 + \frac{L_1}{2} \norm{e_{\Psi}^{j+1}}^2 + \frac{ L_1}{2} \norm{e_{\Psi}^{j+1} - e_{\Psi}^{j}}^2  + \tau K_m \norm{\nabla e_{\Psi}^{j+1}}^2 \\
 + \alpha_c \norm{e_{c}^{j}}^2 + \theta_m \norm{ e_{c}^{j+1}}^2
 + \frac{L_2}{2}\norm{e_{c}^{j+1}}^2 + \frac{L_2}{2}\norm{e_{c}^{j+1} - e_{c}^{j}}^2 \\
 + \tau D \norm{\nabla e_{c}^{j+1}}^2   \le \  \frac{L_1}{2}\norm{e_{\Psi}^{j}}^2 + ( \frac{\delta_0}{2} + \frac{\tau (M^2_\Psi+1)L_k^2}{2\delta_1} \\
 + \dfrac{\delta_2}{2})\norm{ \theta^{n,j}_h - \theta^{n}_h  }^2 + \frac{1}{2 \delta_0}\norm{e_{\Psi}^{j+1} - e_{\Psi}^{j}}^2  +  \tau \delta_1\norm{\nabla e_{\Psi}^{j+1}}^2  + \dfrac{L_2}{2}\norm{e_{c}^{j}}^2  \\
 + \dfrac{\delta_3}{2} \norm{e_{c}^{j}}^2  +(\dfrac{M_c^2 }{ 2 \delta_2}  + \dfrac{\theta_M^2}{ 2 \delta_3}) \norm{e_{c}^{j+1} - e_{c}^{j}}^2  + \tau  \dfrac{M_u^2}{ 2 \delta_4} \norm{e_{c}^{j+1}}^2 \\
 +  \tau \dfrac{\delta_4}{2} \norm{\nabla e_{c}^{j+1}}^2.
\end{split}
\end{align}
Choosing $\delta_0 = \delta_2 = \dfrac{\alpha_\Psi}{2}$, $\delta_1 = \dfrac{K_m}{2}$, $\delta_3 = \theta_m$ and $\delta_4 = \dfrac{D}{2}$ in \eqref{eq_proof_2}, and assuming that 
\begin{equation} \label{Lcondtions}
L_1 \ge  \dfrac{2}{\alpha_\Psi} \; \text{ and } \; L_2 \ge \dfrac{2 M_c^2}{ \alpha_\Psi } +  \dfrac{\theta^2_M}{ \theta_m}, 
\end{equation}
and the time step $\tau$ satisfies the mild conditions
\begin{equation}\label{conditions_tau}
\alpha_\Psi - 2 \tau \frac{\tau (M^2_\Psi+1)L_k^2}{K_m} \ge 0 \quad \rm{ and } \quad \theta_m + 2 \alpha_c + \dfrac{\tau D}{C_\Omega} -  \dfrac{ 2 \tau M^2_u}{D} \ge 0,
\end{equation}
where  $C_\Omega$ denotes the Poincare constant, then we obtain
\begin{align}\label{eq_proof_3}
\begin{split}
  \frac{L_1}{2}  \norm{e_{\Psi}^{j+1}}^2 +  \tau \dfrac{K_m}{2} \norm{\nabla e_{\Psi}^{j+1}}^2  
 + (\frac{L_2}{2} + \theta_m - \tau  \dfrac{M_u^2}{ D} ) \norm{e_{c}^{j+1}}^2 \\
 + \tau \dfrac{D }{2} \norm{\nabla e_{c}^{j+1}}^2  
  \le  \frac{L_1}{2}\norm{e_{\Psi}^{j}}^2  + (\dfrac{L_2}{2} + \dfrac{\theta_m}{2}- \alpha_c )\norm{e_{c}^{j}}^2 . 
 \end{split}
\end{align}
Finally, by using the Poincare inequality two times we get from \eqref{eq_proof_3}
\begin{align}\label{eq_proof_4}
\begin{split}
 (L_1 +  \tau \dfrac{K_m}{C_\Omega}) & \norm{e_{\Psi}^{j+1}}^2  + (L_2 +  2 \theta_m + \tau \dfrac{D }{C_\Omega}- 2 \tau  \dfrac{ M_u^2}{ D} ) \norm{e_{c}^{j+1}}^2  \\
 & \le  L_1\norm{e_{\Psi}^{j}}^2  + (L_2 + \theta_m - 2 \alpha_c )\norm{e_{c}^{j}}^2 . 
 \end{split}
\end{align}
From \eqref{conditions_tau}, \eqref{eq_proof_4} implies that the errors are contracting and therefore the monolithic L-scheme (\ref{Lrichards}) - (\ref{Ltransp}) is convergent. 
\end{proof}

\begin{remark}
The convergence rate resulting from (\ref{eq_proof_4}) does not depend on the spatial mesh size. Also, observe that this convergence is obtained for any initial guess. Based on this, the method is globallly convergent, which is in contrast to the Newton or (modified) Picard schemes, converging only locally. It can be observed that, the larger the time step, the smaller the constants $L_1$ and $L_2$ are, resulting in a faster convergence. For small steps instead the convergence rate can approach 1. On the other hand, if the time step is small enough, one may reach the regime where the Newton scheme becomes convergent (see \cite{pop2}). Alternatively, one may first perform a number of L-scheme iterations, and use the resulting as an initial guess for the Newton scheme (see \cite{List2016}), or consider the modified L-scheme in \cite{Mitra}. In either situations, the convergence behaviour was much improved. 
\end{remark}

\begin{remark}
The convergence of the modified Picard and Newton method applied to the Richards equation has been already proved in \cite{pop2}. Such results can be extended to the coupled problems considered here.
\end{remark}

\section{Numerical examples}\label{numericalexample}
\paragraph{}
In this section we consider four test cases for the proposed linearization schemes, inspired by the literature \cite{List2016,surf}. The schemes have been implemented in the open source software package MRST \cite{mrst}, an open source toolbox based on Matlab, in which multiple solvers and models regarding flows in porous media are incorporated.
\begin{figure}[h!]
\begin{center}
\begin{subfigure}{.45\textwidth}  
  \includegraphics[width=1\linewidth]{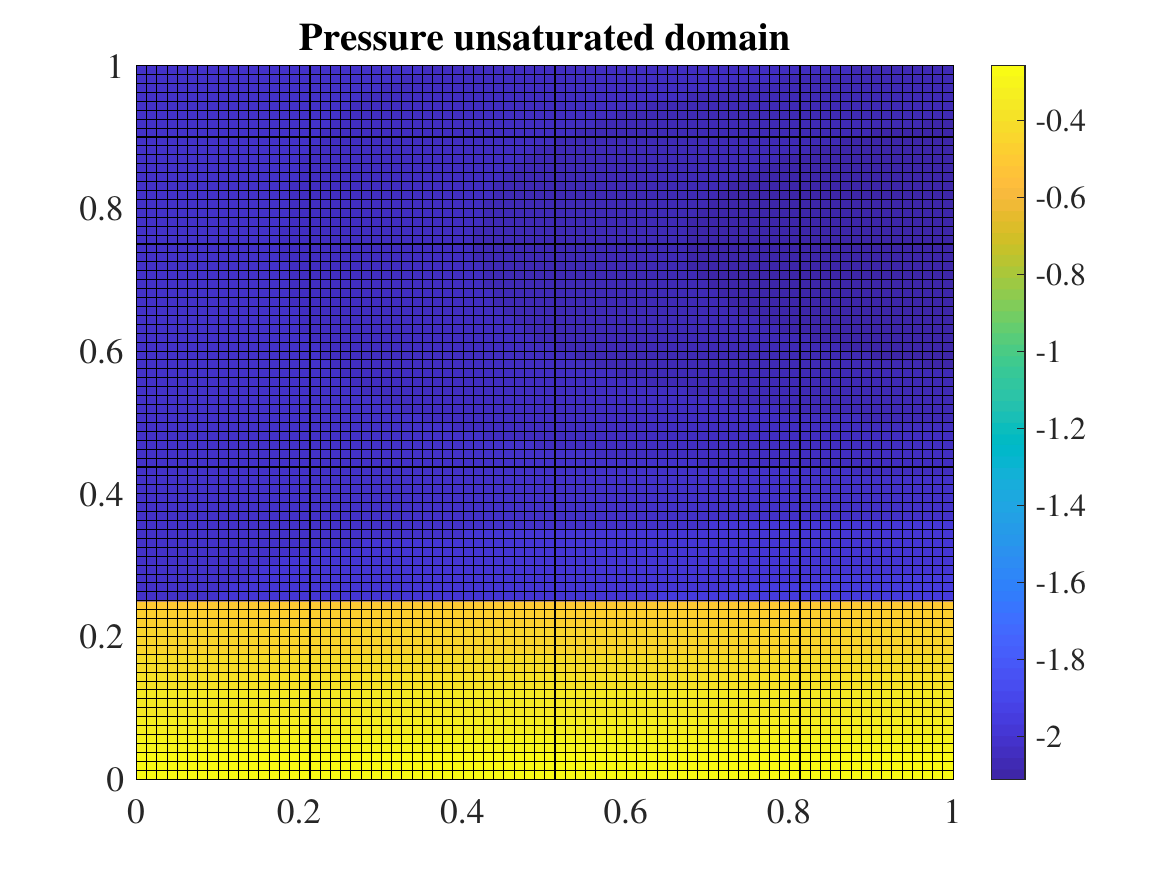}
  \caption{Pressure profile at T = 1}
  \label{fig:PressureVadose}
\end{subfigure}
  \begin{subfigure}{.45\textwidth}  \includegraphics[width=1\linewidth]{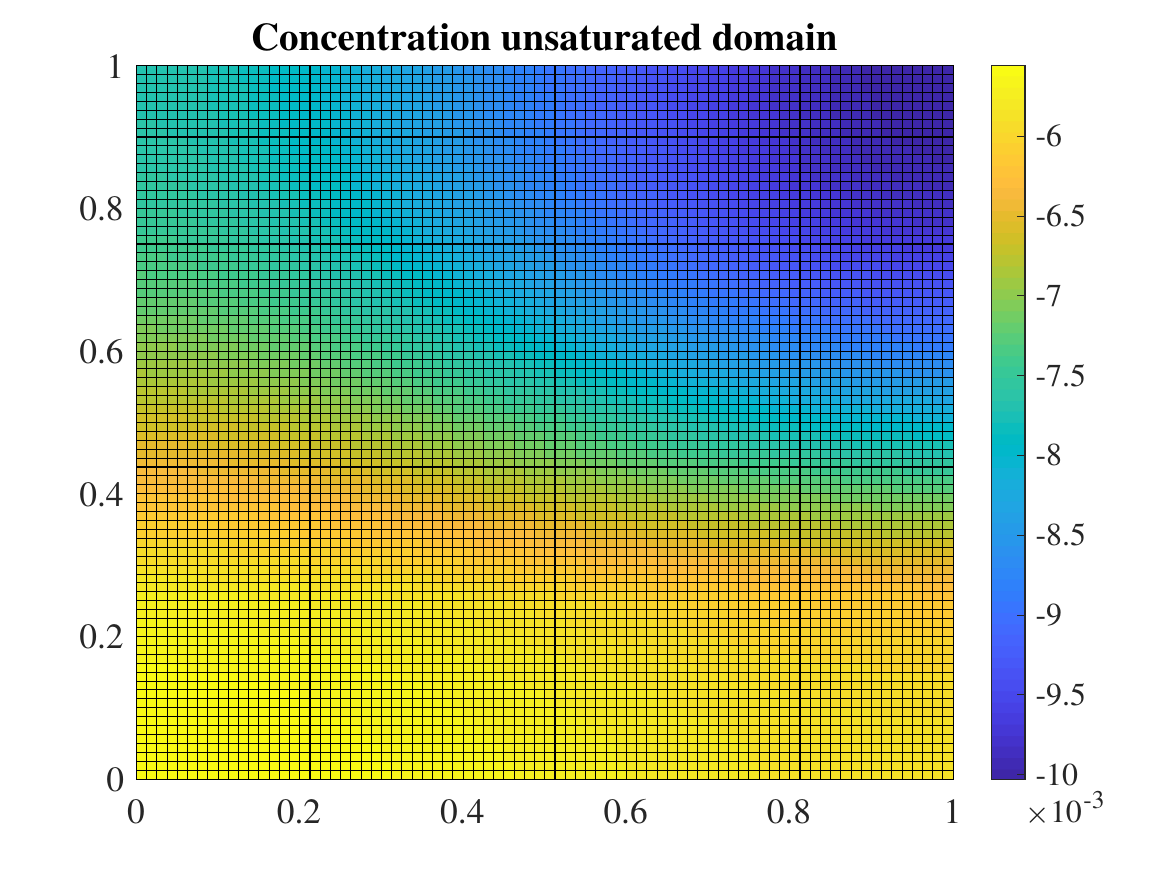}
  \caption{Concentration profile at T = 1}
  \label{fig:ConcentrationVadose}
\end{subfigure}
\caption{Example 1A: pressure and concentration profiles at the final time $T=1$. The simulations were performed with $dx = 1/80$ and $\tau =1/10$}
\end{center}
\end{figure}

\begin{figure}[h!]
\begin{center}
  \includegraphics[scale =.35]{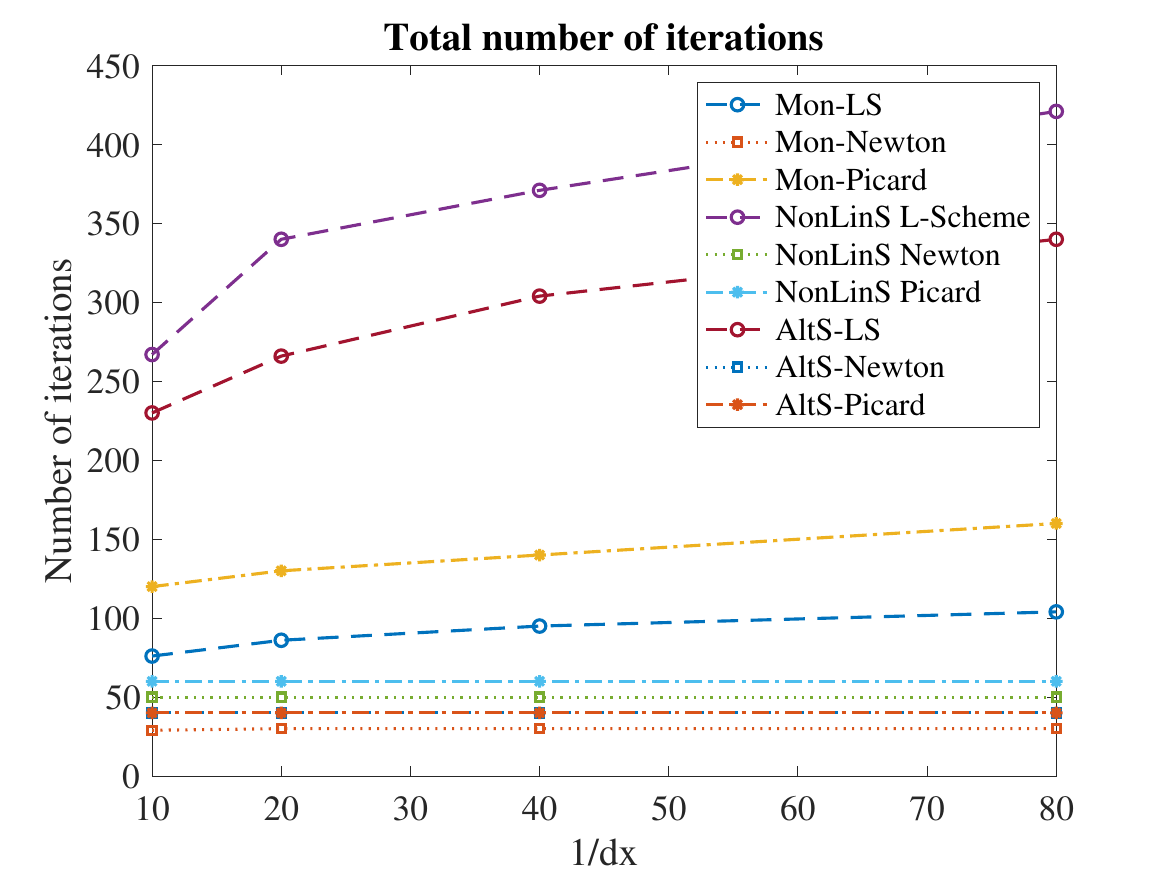}
  \caption{Total numbers of iterations}
  \label{fig:NumberIterationsDifferentSchemes}
\end{center}
\end{figure}

\subsection*{Example 1A: flow and transport in strictly unsaturated media}
\paragraph{}
We start our numerical studies with a manufactured problem admitting an analytical solution \cite{List2016}. The unit square $\Omega$ is divided into two sub-domains: $\Omega_{up}$ and $\Omega_{down}$. The two regions are defined as: $\Omega_{up} = [0,1] \times [1/4,1]$ and $\Omega_{down}=[0,1] \times [0, 1/4]$.
Dirichlet boundary conditions, $\Psi = -3$, and no-flow Neumann boundary conditions are imposed on $\Gamma_{D} = [0,1] \times {1}$ and $\Gamma_{N} = \partial \Omega / \Gamma_D $, respectively. A constant initial pressure $p^0_{up} = -2$, and a non-constant $p^0_{down} = - y - 1/4$ are defined in the upper and in the lower part of the domain, $\Omega_{up}$ and $\Omega_{down}$. The van Genuchten parameters are presented in Table \ref{tab:parameterstab}.

Further, for both Richards and transport equations, we have a source term, $f(x,y) =.006\cos(4/3\pi y)\sin(x)$, on $\Omega_{up}$. No external forces or sources, are defined in the lower region, i.e. $f = 0$ on $\Omega_{down}$. Finally, the initial condition for the concentration is given by  $c^0 = 1$ and the boundary conditions by $c_{| {\Gamma_D}} = 4$. 
 
 \begin{table}
\centering
\begin{tabular}{ c c }
\hline \hline
&  \\ [0.5ex]
 $T_{max}$ & 10 h \\
 $\Omega$  & $[0  ,1 ] \times [0 , 1]$  \\
 $L_\Psi$ & .1 \\
 $L_c$ & .005\\
 Van Genuchten parameters &  \\ 
 $\theta_s$ & .42 \\
 $\theta_r$ & .0026 \\
 $n$ & 2.9 \\
 $\alpha$ & .95 \\ 
 $a$ & .044 \\
 $b$ & .4745 \\ 
 Surface tension parameters & \\
 \textbf{$\zeta$} & 2.4901 \\
 \textbf{$\sigma0$} & 73 mN/m \\
 $K_s$ & .12 cm/min  \\
 $D_0$ & 6.0e-04 \\
 Accuracy requirement & \\
 $\epsilon$ & e-07 \\
\hline
\end{tabular}
\caption{Parameters involved in all the examples}
\label{tab:parameterstab}
\end{table}

We performed simulations using regular meshes, consisting of squares, whose sides were of length $dx = [1/10,\ 1/20,\ 1/40,\ 1/80 ]$.  We consider also varying time steps  of sizes $\tau = [1/10,\ 1/20,\ 1/40,\ 1/80 ]$. In  Fig. \ref{fig:PressureVadose} we are plotting the pressure and concentration profiles at the final time $T = 1$. We point out that in this first example we are always in the strictly unsaturated regime, implying that Richards equation is a regular. All the proposed iterative schemes were converging for this example. In Fig. \ref{fig:NumberIterationsDifferentSchemes} is given the total number of iterations for the different schemes.




More details regarding the total number  of iterations and the condition number of the linear systems are presented in Tables \ref{tab:1.0dx}, \ref{tab:1.0dt}. The condition number is computed at the first iteration of each algorithm and with respect to the Euclidean norm. In Table \ref{tab:1.0dx}, we fixed a time step $\tau=1/10$ and we investigated different mesh sizes, precisely $dx = [1/10, 1/20, 1/40, 1/80]$. In Table \ref{tab:1.0dt} we use a constant $dx = 1/40$ and varying the time step sizes $ \tau = [1/10, 1/20, 1/40, 1/80]$. We point out that the alternate schemes are converging much faster than the classical ones. We also remark the high differences in the condition numbers, the L-scheme based algorithms being much better conditioned.

\begin{table}[]
\centering
  \scalebox{.6}{
  \begin{tabular}{ c | c c | c c  c c  | c c c c}
\hline \hline
& Monolithic & &  & NonLinS &  & & & AltLinS &&\\
\hline
& Newton & & & Newton &  & & &Newton  & &\\
\hline
&  & & &  &  cond. \# & & &  & cond. \# &\\
\hline
dx &\# iterations & condition \#  & \# iterations & Richards && Transport  & \# iterations & Richards && Transport\\ [0.5ex]
 1/10 & 20 & 511.0045         & 40 & 333.4035  && 5.9916 & 20 & 333.4019 && 5.9916  \\ 
 1/20 & 20 & 2.2933e+03   & 40 & 1.5040e+03 && 6.2079 & 20 & 1.5040e+03& & 6.2079 \\
 1/40 & 20 & 9.4458e+03  & 40  & 6.1312e+03 && 6.3234 & 20 & 6.1312e+03 && 6.3234\\
 1/80 & 20 & 3.8371e+04   & 40 &  2.4774e+04 & & 6.3816 & 20 & 2.4774e+04 &&  6.3817\\
 \hline \hline
 & L Scheme & & & L Scheme &  & & & L Scheme & & \\
 \hline
 &  & & &  &  cond. \# & & &  & cond. \# &\\
\hline
dx &\# iterations & condition \#  & \# iterations & Richards && Transport  & \# iterations & Richards && Transport\\ [0.5ex]
1/10 & 277 & 183.4223  & 540 & 177.4742   &&  2.1356  &  264  & 177.4725   &&  2.1356   \\ 
 1/20 & 300 & 812.5650 & 650  &   796.5765  && 2.1839  &  316  &   796.5755   &&  2.1839  \\
 1/40 & 363 & 3.3450e+03 & 750  &  3.2584e+03 & & 2.2092   & 368   & 3.2584e+03 &  & 2.2092 \\
 1/80 & 510 & 1.3585e+04  & 850  & 1.3191e+04  && 2.2220   &  421  &  1.3191e+04   && 2.2220 \\
 \hline\hline
 & Picard & & & Picard   & & & & Picard & &\\
 \hline
 &  & & &  &  cond. \# & & &  & cond. \# &\\
\hline
dx & \# iterations & condition \#  & \# iterations & Richards && Transport  & \# iterations & Richards && Transport\\ [0.5ex]
 1/10 & 100 & 326.8280  &  40  &  177.4610    &&  5.9916   & 20  & 177.4601    && 5.9916  \\
 1/20 & 110 & 1.4667e+03  & 40  &  796.5170 &&  6.2079 &  20 &  796.5129 && 6.2079 \\
 1/40 & 120 & 6.0380e+03  &  40 &  3.2581e+03 &&  6.3234  &  20 & 3.2581e+03 & & 6.3234 \\
 1/80 & 130 &  2.4522e+04   &  40 & 1.3190e+04  &&  6.3816 &  20 &    1.3190e+04 &&   6.3817\\
 \hline
\end{tabular}
}
\caption{Example 1A: unsaturated medium, fixed $\tau=1/10$}
\label{tab:1.0dx}
\end{table}
\begin{table}[]
\centering
    \scalebox{.6}{
  \begin{tabular}{ c | c c | c c  c c  | c c c c}
\hline \hline
& Monolithic & &  & NonLinS &  & & & AltLinS &&\\
\hline
& Newton & & & Newton &  & & &Newton  & &\\
\hline
&  & & &  &  cond. \# & & &  & cond. \# &\\
\hline
$\tau$ &\# iterations & condition \#  & \# iterations & Richards && Transport  & \# iterations & Richards && Transport\\ [0.5ex]
 1/10 & 20 & 9.4458e+03  & 40 & 6.1312e+03    && 6.3234 & 20 & 6.1312e+03  && 6.3234  \\ 
 1/20 & 40 & 4.7275e+03   & 80 & 3.2581e+03 && 6.3234 & 40 &  3.2580e+03 &&  6.3234 \\
 1/40 & 80 &  2.3677e+03  & 160  & 1.7024e+03 && 6.3234 & 80 & 1.7024e+03 &&  6.3234\\
 1/80 & 160 &  1.1876e+03  & 320 &   870.8016 &&   6.3234& 160 &  870.8010 &&  6.3234 \\
  \hline \hline
 & L Scheme & & & L Scheme &  & & & L Scheme & & \\
 \hline
&  & & &  &  cond. \# & & &  & cond. \# &\\
\hline
$\tau$ &\# iterations & condition \#  & \# iterations & Richards && Transport  & \# iterations & Richards && Transport\\ [0.5ex]
1/10 & 363 & 3.3450e+03 & 750    & 3.2584e+03  &&  2.2092   &  368  & 3.2584e+03  &&  2.2092   \\ 
 1/20 & 570 & 1.7540e+03 & 1300  &  1.7026e+03  && 2.2092    &  633  &  1.7026e+03  &&  2.2092  \\
 1/40 & 1048 & 898.9759  & 2160  & 870.2808       && 2.2092    & 1050  &  870.8979    && 2.2092 \\
 1/80 & 1914 &  455.3332  & 3520 &  440.6573      && 2.2092    &  1700 &  440.8161    && 2.2092 \\
 \hline\hline
 & Picard & & & Picard   & & & & Picard & &\\
 \hline
&  & & &  &  cond. \# & & &  & cond. \# &\\
\hline
$\tau$ &\# iterations & condition \#  & \# iterations & Richards && Transport  & \# iterations & Richards && Transport\\ [0.5ex]
 1/10 & 120 & 6.0380e+03  &  40  &  3.2581e+03  &&  6.3234 & 20  & 3.2581e+03  &&6.3234 \\
 1/20 & 220 & 3.0216e+03  & 80  &  1.7025e+03  &&  6.3234  &  40 &  1.7025e+03 && 6.3234 \\
 1/40 & 400 & 1.5132e+03 &  160 &  870.8263  &&  6.3234     &  80  & 870.8251  && 6.3234 \\
 1/80 & 640 &  758.9936   &  320 &  440.8018   && 6.3234     &  160 &  440.8015 &&  6.3234\\
 \hline
\end{tabular}
}
\caption{Example 1A: unsaturated medium, fixed dx=1/40}
\label{tab:1.0dt}
\end{table}

\subsection*{Example 1B: flow and transport in variably saturated porous media}\label{example2}
\paragraph{}
For the second example we use the same domain, mesh sizes, boundary conditions and parameters, but we allow a saturated/unsaturated regime by changing the initial condition for the pressure. We consider a subdivision of $p^0$ between upper and lower regions, precisely: $p^0_{up} = -2$ and $p^0_{down} = -y + 1/4$. This new expression for $p^0_{down}$ gives a positive pressure in the lower part of the domain (saturated region).  For this example the Richards equation is now degenerate parabolic, therefore more challenging for the numerical schemes. Furthermore, we introduce this time also a  reaction term $R(c)$ in the transport equation, given by $ R(c)\ :=\ c/(1+c) $.

At the iteration $j+1$, the term $R(c)$ is linearized in the following way:
\begin{equation*}
R(c^{n+1,j+1}) \rightarrow \frac{1+c^{n+1,j+1}}{c^{n+1,j}}
\end{equation*}
\begin{figure}[]
\begin{center}
\begin{subfigure}{.45\textwidth}  
  \includegraphics[width=1\linewidth]{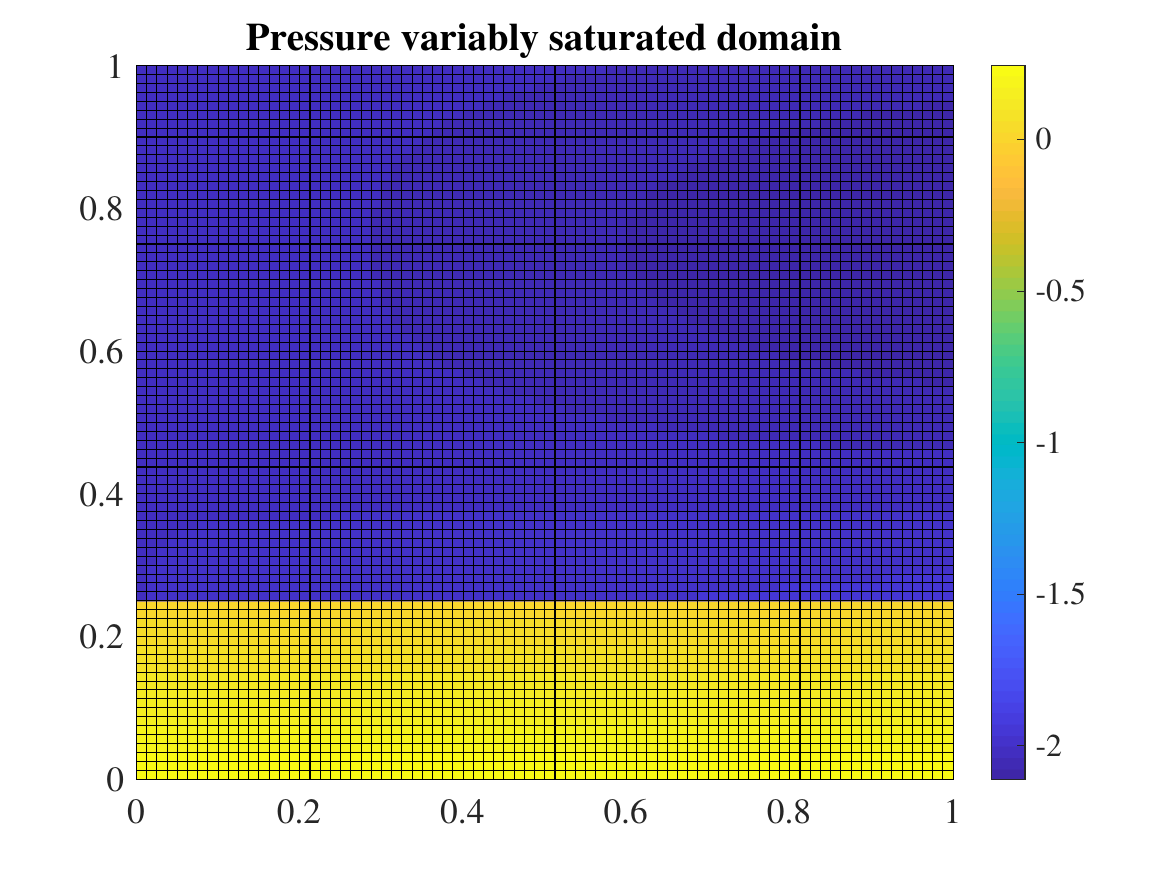}
  \caption{Pressure profile}
  \label{fig:variablypressure}
\end{subfigure}
  \begin{subfigure}{.45\textwidth}
  \includegraphics[width=1\linewidth]{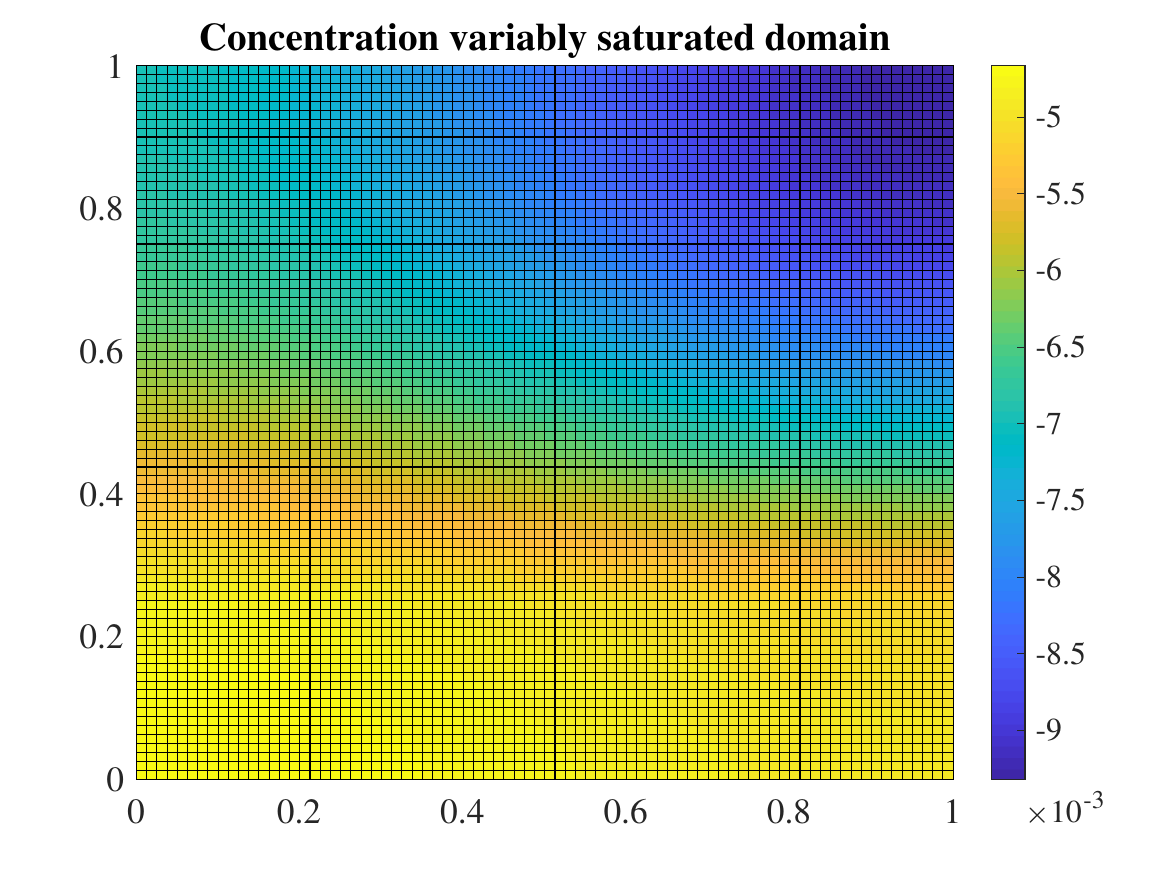}
  \caption{Concentration profile}
  \label{fig:variablyconcentration}
\end{subfigure}
\caption{Example 1B: plots of pressure and concentration in the variably saturated medium, the simulations were done with $dx = 1/80$ and $\tau =1/10$}
\end{center}
\end{figure}
\begin{figure}[h!]
\centering
  \includegraphics[scale =.35]{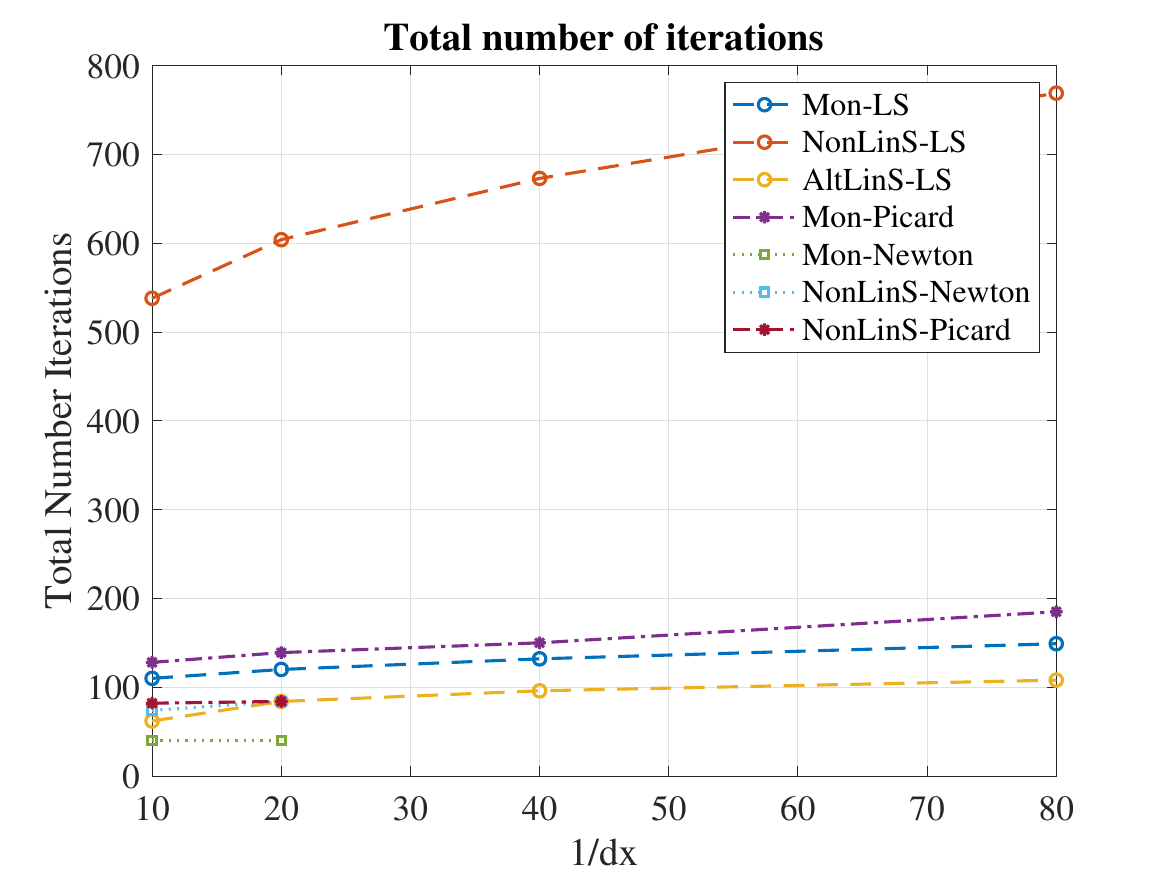}
  \caption{Example 1B: numbers of iterations in the variably saturated porous medium}
  \label{fig:iterationvariably}
\end{figure}

In Fig. \ref{fig:variablypressure} we show again the pressure and concentration profiles at the final step $T=1$.    The main differences to the previous example, i.e. Figure (\ref{fig:PressureVadose}) are in the values of the pressure. We can observe again a discontinuity in the pressure profile but, more importantly, it is evident a jump from negative to positive values. Such results were expected considering the initial pressure imposed on the domain.\par

In Fig. \ref{fig:iterationvariably} are presented the total number of iterations. We remark that in this case only the L-scheme based algorithms are converging. It is also interesting to observe that the difference in the number of iterations between the more commonly used non-linear splitting approach (NonLinS) and alternate splitting (AltLinS) approach. The alternate method appears to be a valid alternative to the common formulation. It produces equally accurate results, requiring fewer iterations.

\par 
As for the previous example we present in the Tables \ref{tab:1.1dx}, \ref{tab:1.1dt} the precise numbers of iterations and condition numbers for each algorithm implemented for different mesh diameters and time steps. Each segment ($-$) in the tables below, implies that the method failed to converge for such particular combination of time step and space mesh. As already observed in Fig. \ref{fig:iterationvariably} the L-scheme based solvers are the only ones converging in all cases. Moreover, the linear systems associated with the L-scheme are better conditioned than the ones for Picard or Newton methods. \textcolor{blue}{We finally remark that, as expected, for smaller time steps  the Newton and Picard schemes converge, see Table \ref{tab:1.1dt}. Anyhow, the condition numbers of the systems associated to the linearized problems, remain ill-conditioned. This can cause further numerical problems.}
\begin{table}[]
\centering
    \scalebox{.6}{
  \begin{tabular}{ c | c c | c c  c c  | c c c c}
\hline \hline
& Monolithic & &  & NonLinS &  & & & AltLinS &&\\
\hline
& Newton & & & Newton &  & & &Newton  & &\\
\hline
&  & & &  &  cond. \# & & &  & cond. \# &\\
\hline
dx &\# iterations & condition \#  & \# iterations & Richards && Transport  & \# iterations & Richards && Transport\\ [0.5ex]
 1/10 & 28 & 2.2753e+11             & 50 & 5.7734e+09            && 1.1251                & - & 5.5783e+09 && 1.0117 \\ 
 1/20 & - &  1.2345e+12 & - & 4.6521e+09 &&  1.0126 & - & .6521e+09 &&  1.0126  \\
 1/40 & - & 4.5159e+12 & -  &5.2321e+09 &&  1.0124 & - & 5.2321e+09 && 1.0124 \\
 1/80 & - & .7232e+13  & - & 5.5219e+09  & & 1.0123 & - & 5.5219e+09  && 1.0123\\
 \hline \hline
 & L Scheme & & & L Scheme &  & & & L Scheme & & \\
 \hline
 &  & & &  &  cond. \# & & &  & cond. \# &\\
\hline
dx &\# iterations & condition \#  & \# iterations & Richards && Transport  & \# iterations & Richards && Transport\\ [0.5ex]
 1/10 & 175 & 247.8672 & 440        & 239.2940 && 1.3314  &  264  & 239.8408   &&  1.3314   \\ 
 1/20 & 314 & 1.0576e+03  & 650  &  1.0432e+03  &&  1.3338&  316  &  1.0432e+03  && 1.3338  \\
 1/40 & 352 &  4.2256e+03  &  750 &  4.1291e+03  && 1.3328  & 368   & 4.1291e+03 && 1.3328 \\
 1/80 & 408& 1.6902e+04   &  910  & 1.6437e+04  &&  1.3323  &  421  &  1.6437e+04   &&  1.3323 \\
 \hline\hline
 & Picard & & & Picard   & & & & Picard & &\\
 \hline
 &  & & &  &  cond. \# & & &  & cond. \# &\\
\hline
dx &\# iterations & condition \#  & \# iterations & Richards && Transport  & \# iterations & Richards && Transport\\ [0.5ex]
 1/10 & - &4.5478e+11  & 50 & 5.7735e+09  && 1.1251                           & - & 5.5783e+09 &&.0117 \\ 
 1/20 & - & 2.4690e+12 & - & 4.6521e+09&&  1.0126 & - & 4.6521e+09&&1.0126  \\
 1/40 & - &  9.0318e+12  & -  &5.2321e+09&&   1.0124 & - &  5.2321e+09 && 1.0124 \\
 1/80 & - &  3.4465e+13   & - &  5.5219e+09 &&   1.0123& - &  5.5219e+09 && 1.0123\\
 \hline
\end{tabular}
}
\caption{Example 1B: variably saturated medium, fixed $\tau=1/10$}
\label{tab:1.1dx}
\end{table}
\begin{table}[]
\centering
  \scalebox{.6}{
  \begin{tabular}{ c | c c | c c  c c  | c c c c}
\hline \hline
& Monolithic & &  & NonLinS &  & & & AltLinS &&\\
\hline
& Newton & & & Newton &  & & &Newton  & &\\
\hline
&  & & &  &  cond. \# & & &  & cond. \# &\\
\hline
$\tau$ &\# iterations & condition \#  & \# iterations & Richards && Transport  & \# iterations & Richards && Transport\\ [0.5ex]
1/10 & - & 4.5159e+12 & - & 5.2321e+09  && 1.0124          & - & 5.5783e+09 && 1.0117 \\ 
 1/20 & - & 4.5194e+12 & - & 2.1747e+10&& 1.0062           & - & 4.6521e+09  && 1.0126  \\
 1/40 & 80 &  4.5265e+12& 200  &1.0442 e+10  & & 1.1325& - & 5.2321e+09 & &1.0124 \\
 1/80 & 160 & 4.5406e+12& 400 & 4.3494e+10 && 1.1325 & - & 5.5219e+09 & &1.0123  \\
 \hline \hline
 & L Scheme & & & L Scheme &  & & & L Scheme & & \\
 \hline
 &  & & &  &  cond. \# & & &  & cond. \# &\\
\hline
$\tau$ &\# iterations & condition \#  & \# iterations & Richards && Transport  & \# iterations & Richards && Transport\\ [0.5ex]
 1/10 & 352  & 4.2256e+03 & 750    & 4.1291e+03 &&  1.3328  &  368  & 4.1291e+03  & &  1.3328   \\ 
 1/20 & 627 & 2.2518e+03 & 1300  &  2.1862e+03  && 1.3328  &  633  &  2.1890e+03   &&  1.3328  \\
 1/40 &1100 & 1.1624e+03 & 2160  & 1.1258e+03   && 1.3328  & 1050  &  1.1266e+03 & &1.3328 \\
 1/80 & 1900 & 589.7690 & 3520 &  570.6119      && 1.3328     &  1700 &  571.0523 && 1.3328 \\
 \hline\hline
 & Picard & & & Picard   & & & & Picard & &\\
 \hline
&  & & &  &  cond. \# & & &  & cond. \# &\\
\hline
$\tau$ &\# iterations & condition \#  & \# iterations & Richards && Transport  & \# iterations & Richards && Transport\\ [0.5ex]
 1/10 & -& 2.4690e+12 &  - &  5.2321e+09  &&  1.0124 & -  & 5.2321e+09 &&1.0124\\
 1/20 & - & 9.0388e+12  & -  & 1.0442e+10  &&  1.0062  &- & 1.0442e+10  && 1.0062  \\
 1/40 & - & 9.0529e+12  & 200 & 2.1748e+10  &&  1.1325 &-  & 2.0860e+10  &&1.0031  \\
 1/80 & - & 9.0811e+12  &  400 & 4.3494e+10  && 1.1325 &- & 4.1698e+10  && 1.0015 \\
 \hline
\end{tabular}
}
\caption{Example 1B: variably saturated medium, fixed dx=1/40}
\label{tab:1.1dt}
\end{table}
\subsection*{Example 2A: well in unsaturated porous media}
\paragraph{}
Our next example is inspired from \cite{surf}.  We consider same domain (e.g. the unit square), boundary and initial condition and parameters as in the first numerical example (1A). The medium is again strictly unsaturated. We include now, in the upper part of the domain, a well and inject water with a specific concentration of the external component. No analytical solution is available for this case. Due to the higher complexity of the problem we use more refined meshes, precisely $dx = [1/50, 1/100, 1/150, 1/200]$. The pressure at the well is set to $p_W = -10$ and the concentration of the surfactant to $c_W = 10$. 
\par 
In Fig. \ref{fig:wellunsat}, we present the different profiles of pressure and concentration at the initial time $t^0$ and at final time $T=1$ day. 
\begin{figure}[H]
\begin{center}
\begin{subfigure}{.45\textwidth}  
  \includegraphics[width=1\linewidth]{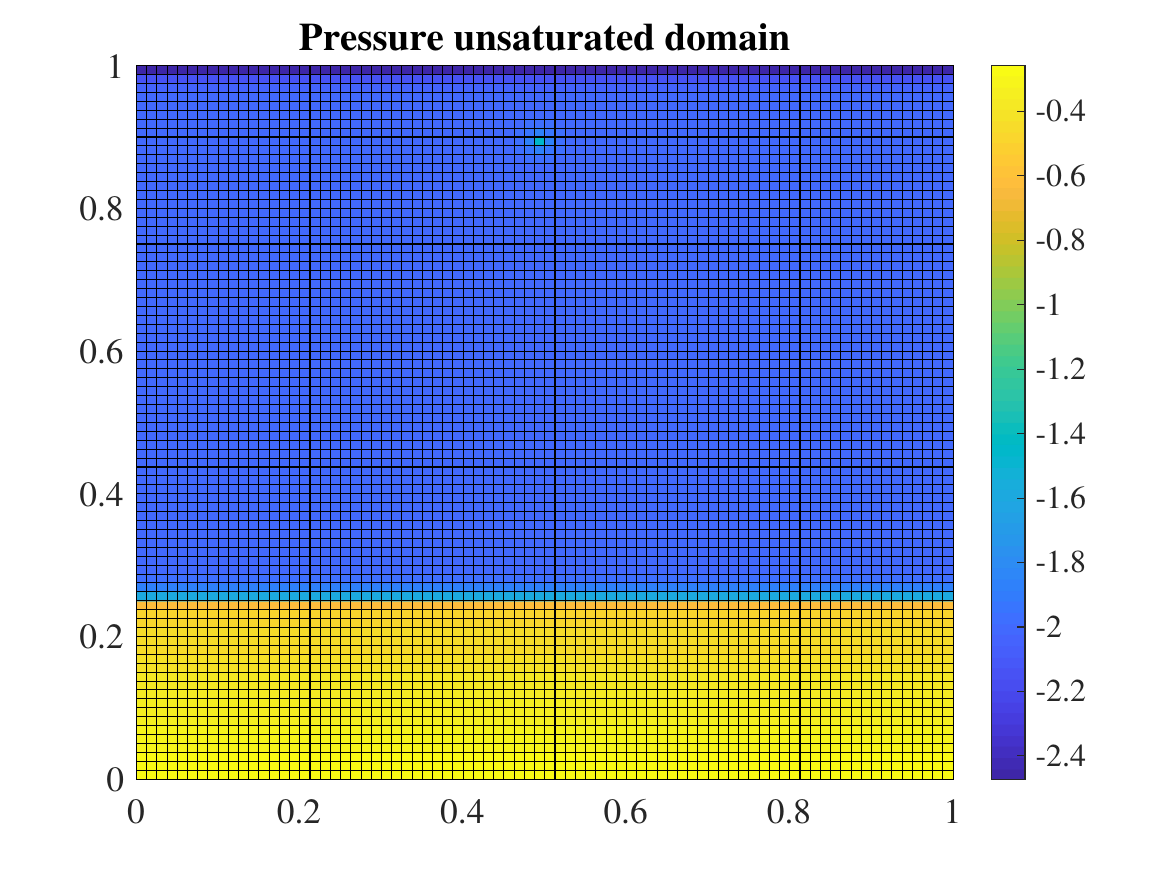}
  \caption{Pressure profile at first time step}
  \label{fig:variablypressure}
\end{subfigure}
  \begin{subfigure}{.45\textwidth}
  \includegraphics[width=1\linewidth]{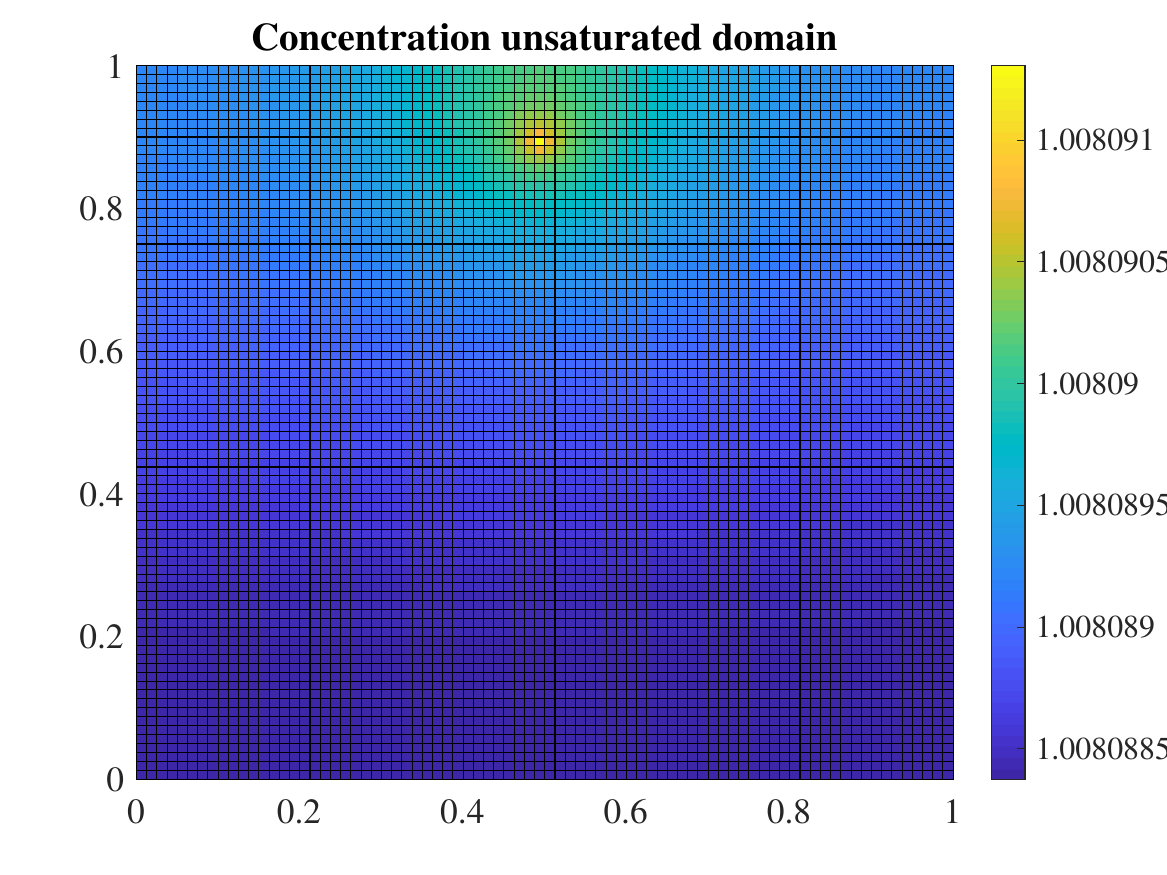}
  \caption{Concentration profile at first time step}
  \label{fig:variablyconcentration}
\end{subfigure}
  \begin{subfigure}{.45\textwidth}  
  \includegraphics[width=1\linewidth]{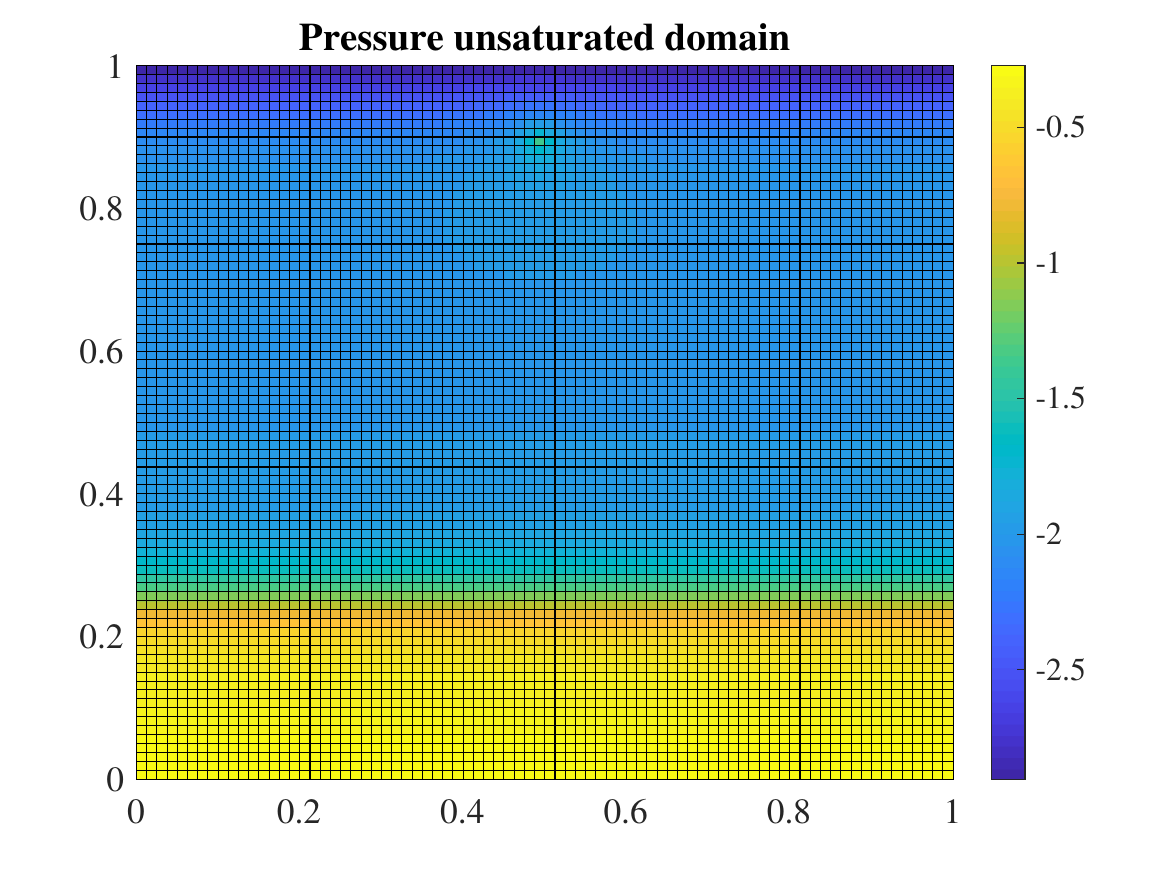}
  \caption{Pressure profile after one day}
  \label{fig:variablypressure}
\end{subfigure}
  \begin{subfigure}{.45\textwidth}
  \includegraphics[width=1\linewidth]{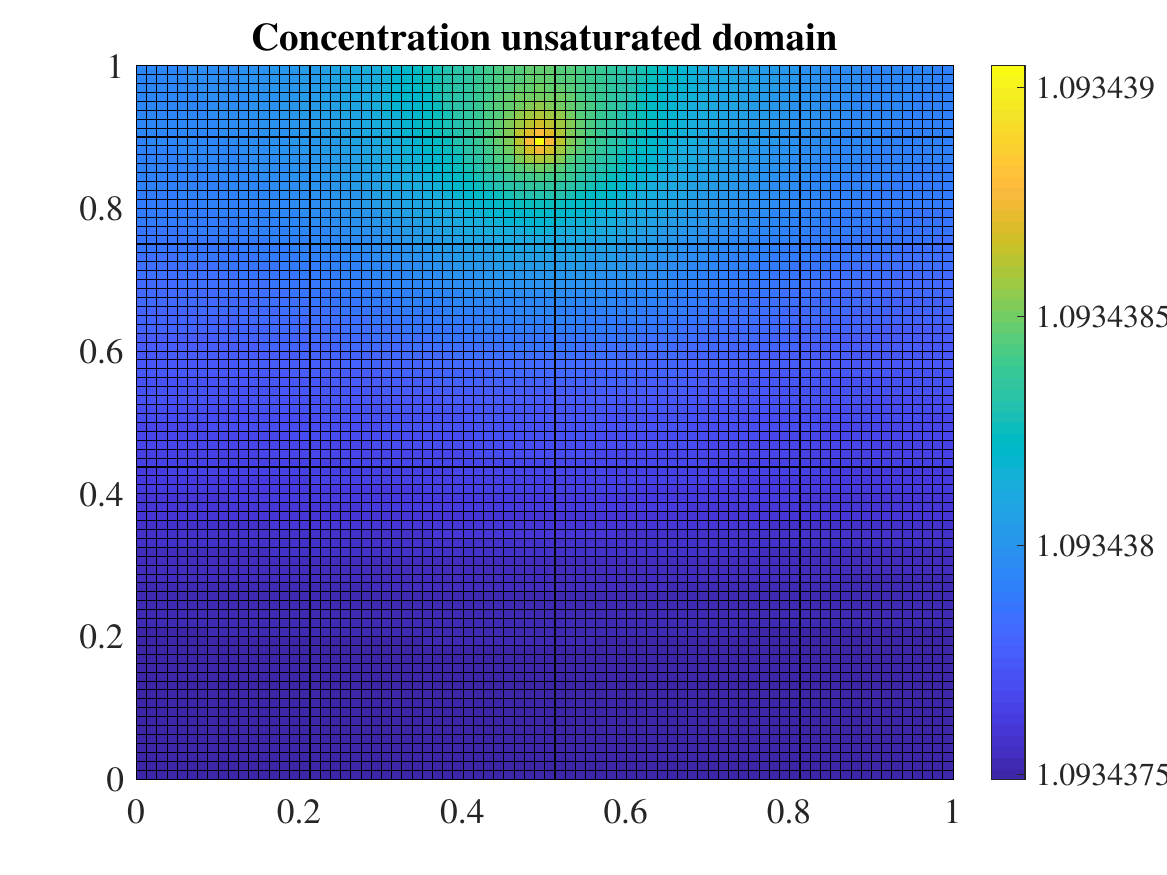}
  \caption{Concentration profile after one day}
  \label{fig:variablyconcentration}
\end{subfigure}
\caption{Example 2A: plots of pressure and concentration in unsaturated medium, the simulations were done with $dx = 1/100$ and $\tau =dx$}
\label{fig:wellunsat}
\end{center}
\end{figure}

Once more, in Fig. \ref{fig:iterationwell} we compare the different solving algorithms. We study the numbers of iterations and the conditions numbers of the linearized systems. As for the first example, the media being unsaturated, the Richards equation does not degenerate and all the schemes converge. 
We can observe, in the Tables \ref{tab:2.0dx}, \ref{tab:2.0dt}, that the monolithic Newton method is the fastest, in term of numbers of iterations. We remark that the alternate splitting approach (AltLinS), once more, requires fewer iterations than the classical splitting algorithm (NonLinS) for all the linearization schemes. The linear systems resulting by applying the L-scheme based solvers are better conditioned compared with the other solvers.
\begin{figure}[]
\centering
  \includegraphics[scale =.4]{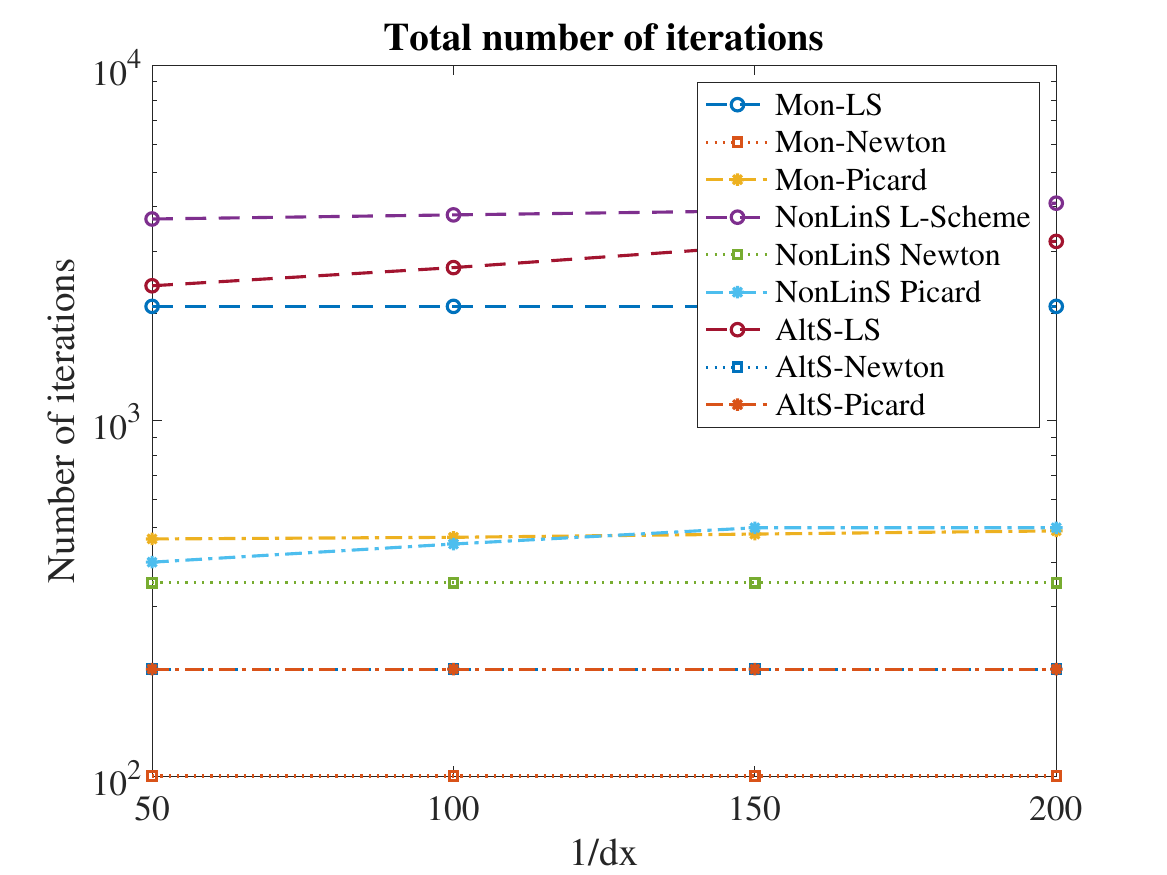}
  \caption{Example 2A: Logarithmic plot of numbers of iterations in unsaturated porous medium}
  \label{fig:iterationwell}
\end{figure}
\begin{table}[]
\centering
  \scalebox{.6}{
  \begin{tabular}{ c | c c | c c  c c  | c c c c}
\hline \hline
& Monolithic & &  & NonLinS &  & & & AltLinS &&\\
\hline
& Newton & & & Newton &  & & &Newton  & &\\
\hline
&  & & &  &  cond. \# & & &  & cond. \# &\\
\hline
dx &\# iterations & condition \#  & \# iterations & Richards && Transport  & \# iterations & Richards && Transport\\ [0.5ex]
 1/50 & 100 & 7.6243e+09   & 350 & 22.2624    && 3.3828e+09 & 200 & 22.1804 &&3.1779e+09   \\ 
 1/100 & 100 & 4.2369e+10   & 350 & 25.9419 && 5.2695e+10 & 200 & 31.8223  & & 5.5839e+10  \\
 1/150 & 100 &  1.0394e+11  & 350  & 42.8667 &&  2.8324e+11& 200 & 33.0562  && 2.8324e+11\\
 1/200 & 100 & 1.9614e+11   & 350 &   56.4999 & & 9.1099e+11 & 200 & 42.8581  &&  9.1662e+11\\
 \hline \hline
 & L Scheme & & & L Scheme &  & & & L Scheme & & \\
 \hline
&  & & &  &  cond. \# & & &  & cond. \# &\\
\hline
dx &\# iterations & condition \#  & \# iterations & Richards && Transport  & \# iterations & Richards && Transport\\ [0.5ex]
1/50 & 2100 & 2.7971e+09      & 3700   & 4.2830  &&   1.6426e+09   &  2400 & 4.2489 &&  1.4735e+09  \\ 
 1/100 & 2100 &  2.5556e+10  & 3800    &   5.0706&&  2.6387e+10   &   2700 & 5.0005  && 2.6331e+10 \\
 1/150 & 2100 & 7.6270e+10   & 3900   &  6.3840& &   1.4791e+11	  &  3100  &6.3731 &  &1.4165e+11  \\
 1/200 & 2100 & 1.3652e+11  & 4100  & 8.1437  && 4.5556e+11  &  3200  &  8.2452  && 4.5159e+11 \\
 \hline\hline
 & Picard & & & Picard   & & & & Picard & &\\
 \hline
&  & & &  &  cond. \# & & &  & cond. \# &\\
\hline
$\tau$ &\# iterations & condition \#  & \# iterations & Richards && Transport  & \# iterations & Richards && Transport\\ [0.5ex]
 1/50 & 465  & 4.3612e+09 & 400 &  22.3247 && 1.7215e+09 & 200 & 22.1612 && 7.6372e+08  \\
 1/100 &470 & 3.2796e+10  & 450  & 25.9427 && 2.6395e+10 &  200 & 26.0764 && 1.3243e+10\\
 1/150 &  480 & 9.0401e+10 & 500 & 33.4899 && 1.4173e+11 &  200&  33.0714& &  7.0961e+10\\
 1/200 &  490 &   1.7576e+11  &500 & 42.7698&& 4.5570e+11 &  200 & 42.737 &&  2.2773e+11 \\
 \hline
\end{tabular}
}
\caption{Example 2A: unsaturated medium, fixed $\tau=1/50$}
\label{tab:2.0dx}
\end{table}
\begin{table}[]
\centering
  \scalebox{.6}{
  \begin{tabular}{ c | c c | c c  c c  | c c c c}
\hline \hline
& Monolithic & &  & NonLinS &  & & & AltLinS &&\\
\hline
& Newton & & & Newton &  & & &Newton  & &\\
\hline
&  & & &  &  cond. \# & & &  & cond. \# &\\
\hline
$\tau$ &\# iterations & condition \#  & \# iterations & Richards && Transport  & \# iterations & Richards && Transport\\ [0.5ex]
1/50 & 100 & 7.6243e+09 & 300 & 22.2624  && 3.3828e+09 & 200 & 22.1804 && 3.1779e+09 \\ 
 1/100 & 200 & 2.7797e+09  & 350 & 23.0671 && 8.0658e+08  & 400 &   22.9854&& 3.8663e+08 \\
 1/150 & 300 &  1.4883e+09 & 400  & 21.3871 & & 3.8252e+08 & 600 & 21.9674  & & 2.0727e+08 \\
 1/200 & 400 & 9.2045e+08 & 500 & 21.2462  && 1.9030e+08& 800 &  21.3715 & & 1.8499e+08 \\
 \hline \hline
 & L Scheme & & & L Scheme &  & & & L Scheme & & \\
 \hline
&  & & &  &  cond. \# & & &  & cond. \# &\\
\hline
$\tau$ &\# iterations & condition \#  & \# iterations & Richards && Transport  & \# iterations & Richards && Transport\\ [0.5ex]
 1/50 & 2100 & 2.7971e+09 & 3700 & 4.2830 &&1.6426e+09 &2400 &4.2489& &1.4735e+09 \\ 
 1/100 & 4100&8.0703e+08 & 7700 &4.1966 && 3.6673e+08 & 4200& 4.1408 && 1.8802e+08 \\
 1/150 &5900 & 3.8356e+08 &11250 &  4.1439 &&1.6219e+08 & 5800 & 4.1052 & & 8.4601e+07\\
 1/200 & 7700& 2.2641e+08 &14600& 4.0893&&9.1518e+07  &7200  & 4.0870 && 4.7818e+07\\
 \hline\hline
 & Picard & & & Picard   & & & & Picard & &\\
 \hline
&  & & &  &  cond. \# & & &  & cond. \# &\\
\hline
$\tau$&\# iterations & condition \#  & \# iterations & Richards && Transport  & \# iterations & Richards && Transport\\ [0.5ex]
 1/50 & 465 & 4.3612e+09 & 400 &22.3247 &&1.7215e+09 &200&22.1612 &&7.6372e+08 \\
 1/100 & 880 &  1.3673e+09 & 600 &  23.0000&&1.9895e+08 & 400  & 21.5095 && 1.9878e+08  \\
 1/150 &1300 & 6.6937e+08 & 900 &22.4168 &&9.6242e+07 &600 & 21.2971 &&8.8586e+07  \\
 1/200 &1700& 4.0868e+08 &1200 &21.2490 && 5.0376e+07&750 & 21.2490&& 5.0376e+07\\
 \hline
\end{tabular}
}
\caption{Example 2A: unsaturated medium, fixed dx=1/50}
\label{tab:2.0dt}
\end{table}
\subsection*{Example 2B: well in variably saturated porous media}
\paragraph{}
This fourth numerical example is obtained by changing the initial condition for pressure in the example 2A. We use the same $p^0$ as in example 1B. The profiles of pressure and concentration at the beginning and end of the simulation, i.e. at $T=1$ hour, are presented in Fig. \ref{itwellvarsat}. We can observe smaller changes, compared to the previous example, due to a smaller time interval (1 hour versus 1 day). 
\begin{figure}[]
\begin{center}
\begin{subfigure}{.45\textwidth}  
  \includegraphics[width=1\linewidth]{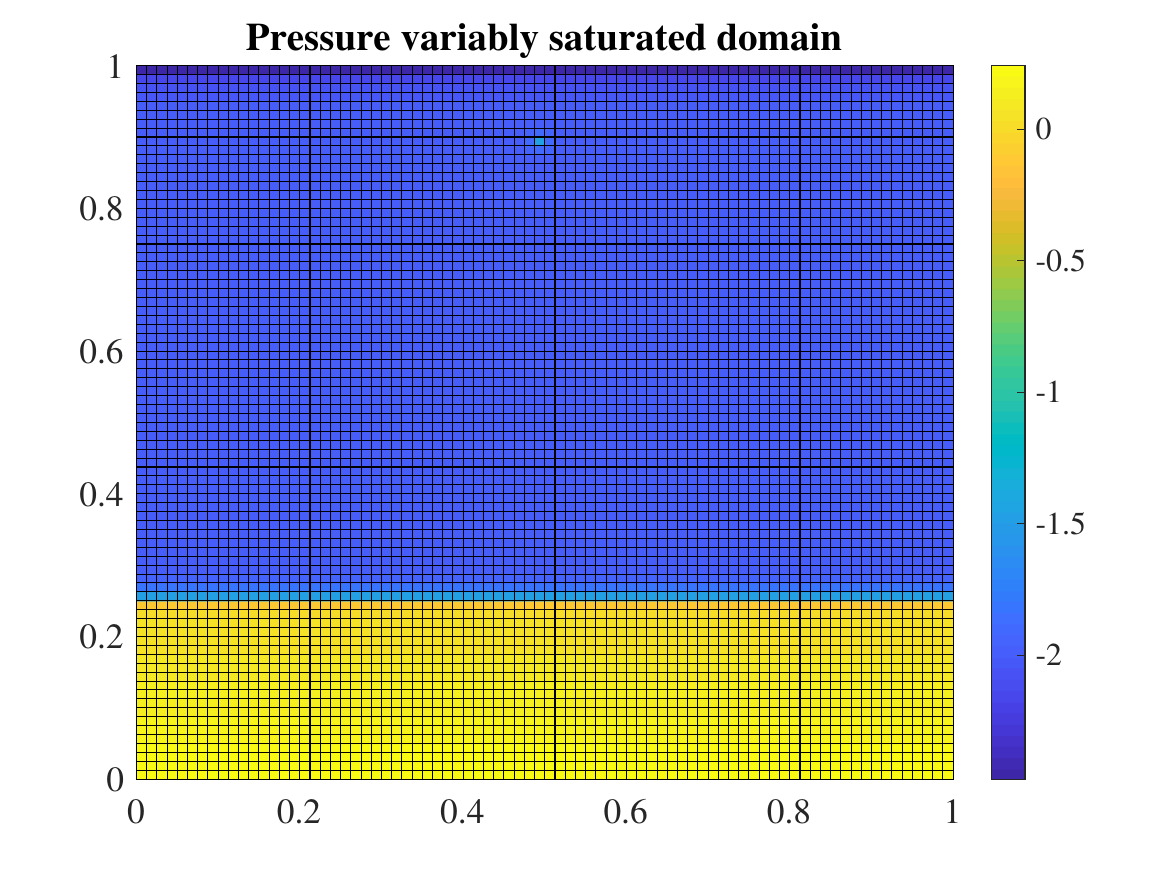}
  \caption{Pressure profile at first time step}
  \label{fig:variablypressure}
\end{subfigure}
  \begin{subfigure}{.45\textwidth}
  \includegraphics[width=1\linewidth]{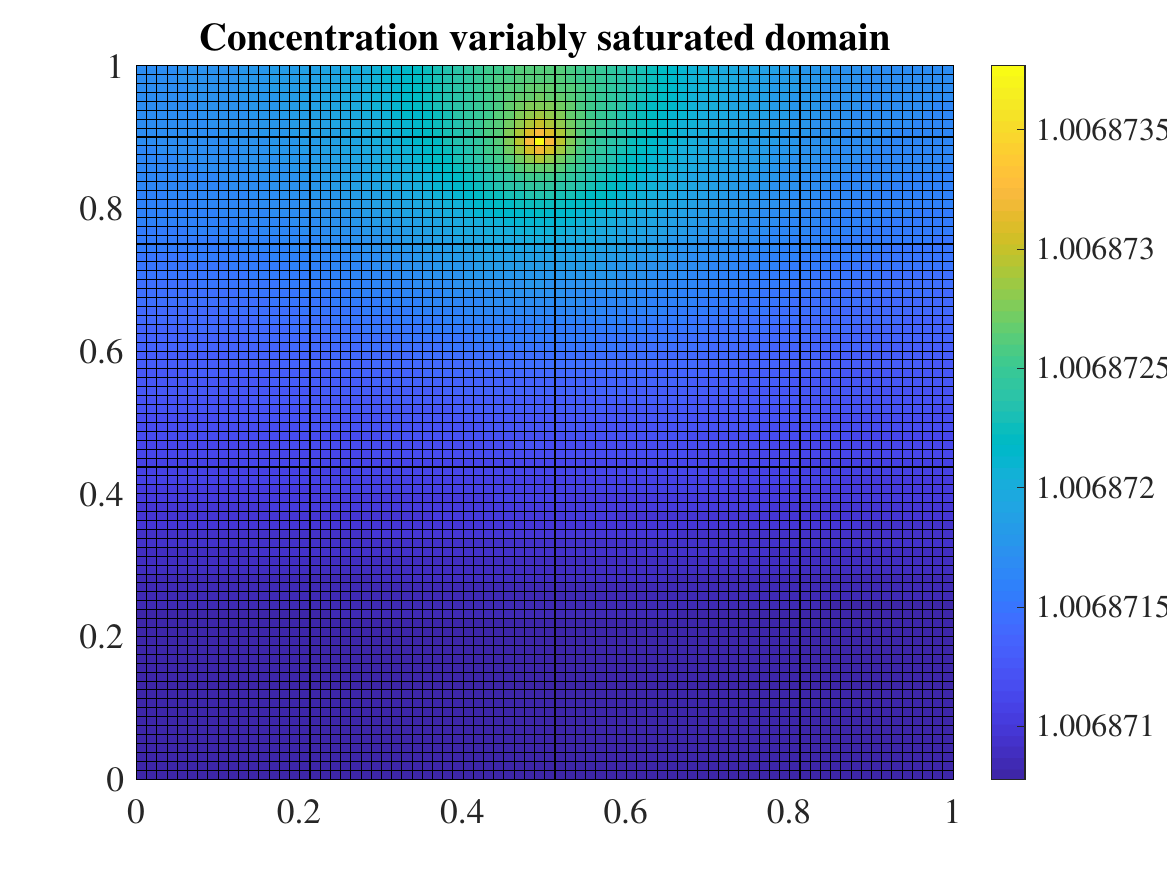}
  \caption{Concentration profile at first time step}
  \label{fig:variablyconcentration}
\end{subfigure}
\caption{Example 2B: plots of pressure and concentration in unsaturated medium, the simulations were done with $dx = 1/80$ and $\tau =dx/100$}
\end{center}
\end{figure}
\begin{figure}[]
\begin{center}
  \begin{subfigure}{.45\textwidth}  
  \includegraphics[width=1\linewidth]{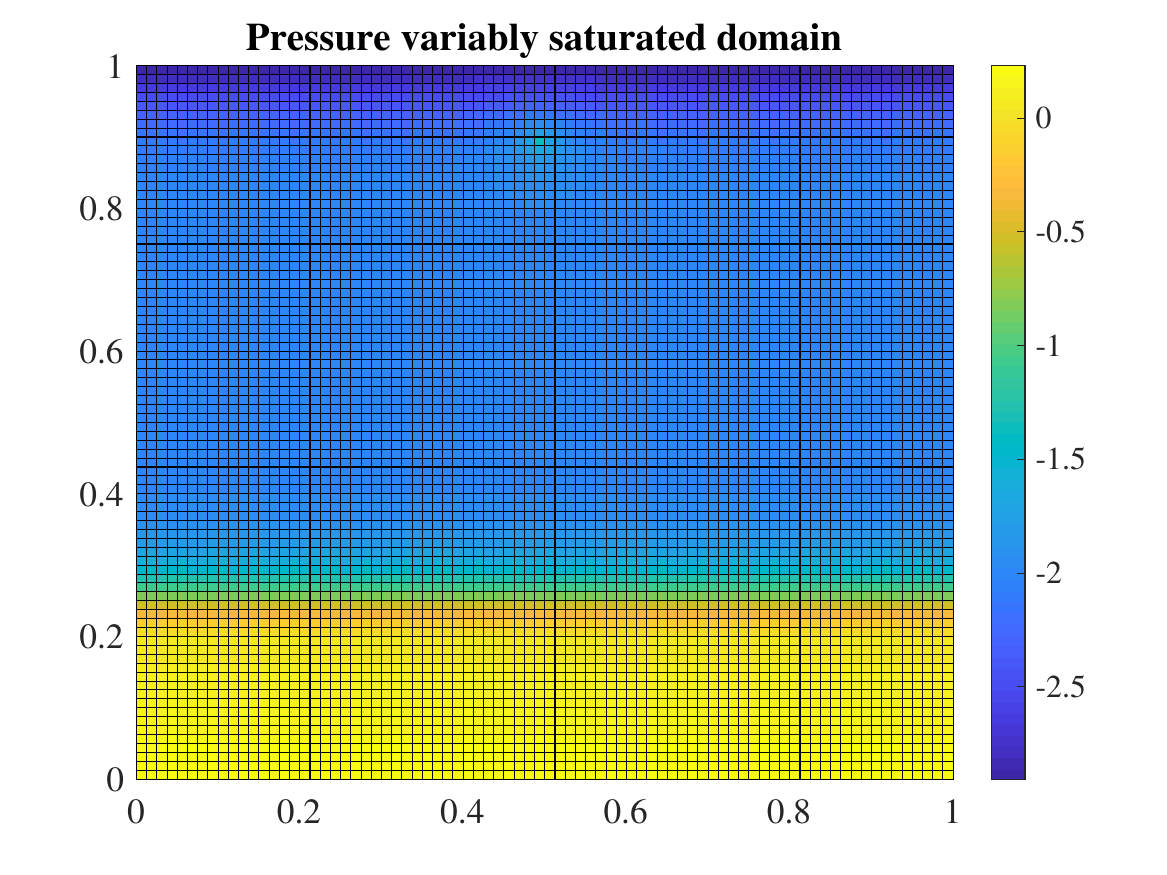}
  \caption{Pressure profile after one hour}
  \label{fig:variablypressure}
\end{subfigure}
  \begin{subfigure}{.45\textwidth}
  \includegraphics[width=1\linewidth]{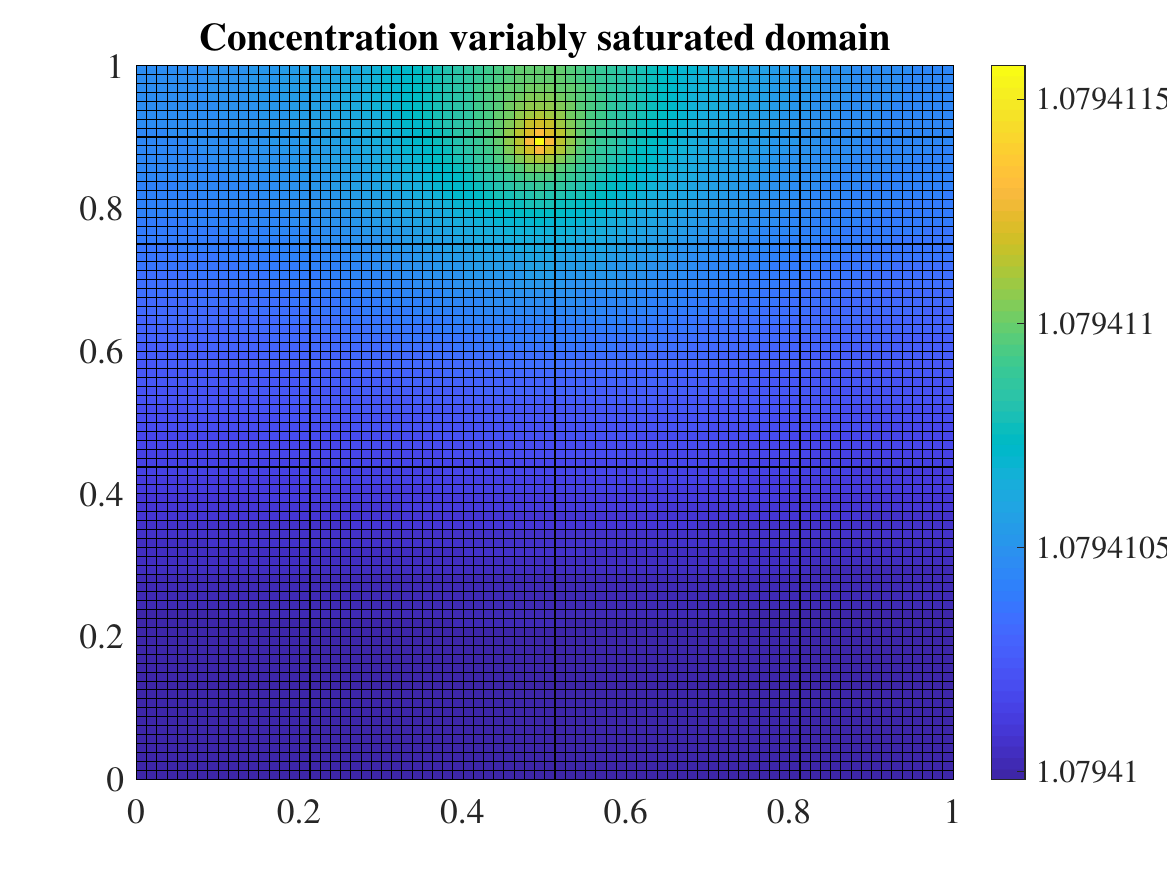}
  \caption{Concentration profile after one hour}
  \label{fig:variablyconcentration}
\end{subfigure}
\caption{Example 2B: pressure and concentration profiles after one hour. The simulations were done with $dx = 1/80$ and $\tau =dx/100$}
\label{fig:wellvarsat}
\end{center}
\end{figure}
In Fig. \ref{itwellvarsat} we present the total number of iterations for the different schemes applied to example 2B. Similar to the example 1B, due to the degeneracy of the Richards equation, many of the considered schemes show convergence problems. In the Tables \ref{tab:2.1dx},\ref{tab:2.1dt} we study the convergence of the schemes and the condition number of the associated linear systems. The results are very similar with the previous examples, with the L-scheme based solvers being the most robust one for all the cases and with the alternate method being faster than the classical splitting schemes.
\begin{figure}[]
\centering
  \includegraphics[scale =.4]{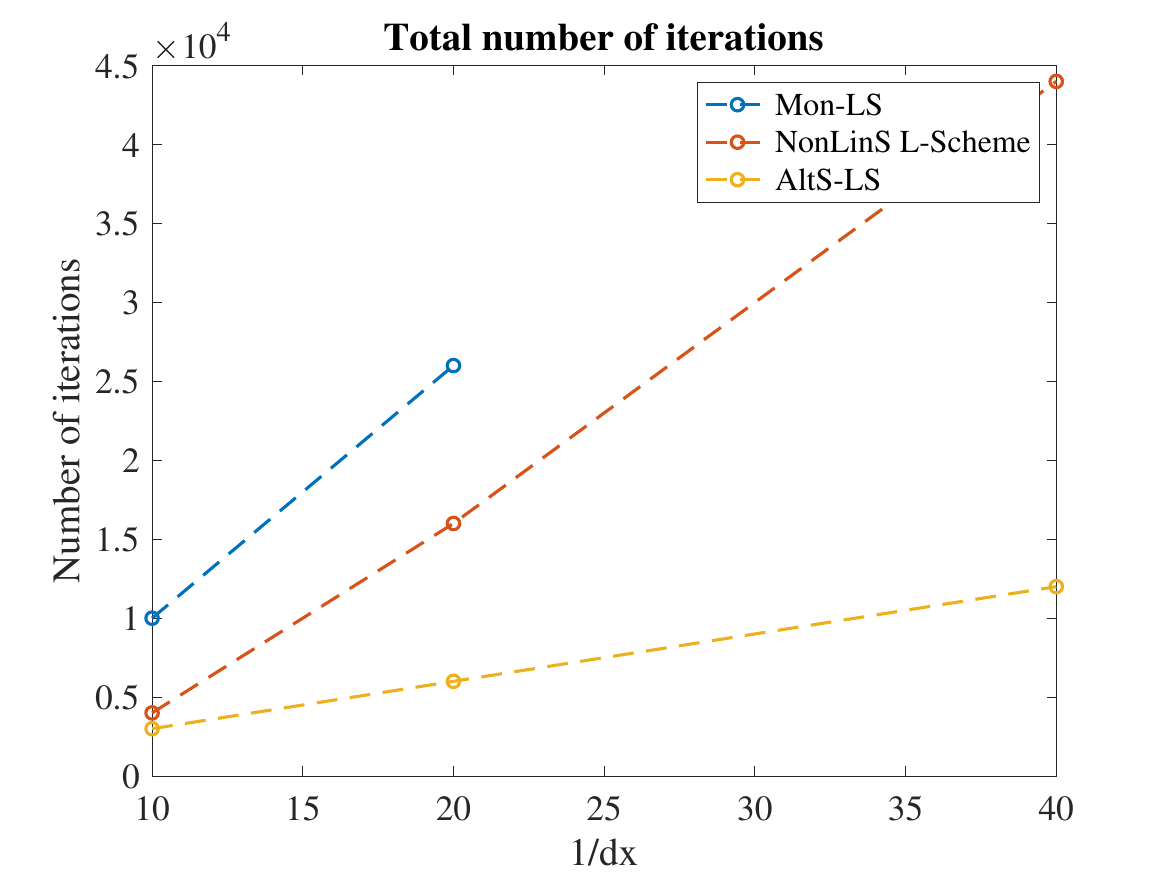}
  \caption{Example 2B: total number of iterations for different algorithms}
  \label{itwellvarsat}
\end{figure}

\begin{table}[]
\centering
  \scalebox{.6}{
  \begin{tabular}{ c | c c | c c  c c  | c c c c}
\hline \hline
& Monolithic & &  & NonLinS &  & & & AltLinS &&\\
\hline
& Newton & & & Newton &  & & &Newton  & &\\
\hline
&  & & &  &  cond. \# & & &  & cond. \# &\\
\hline
dx &\# iterations & condition \#  & \# iterations & Richards && Transport  & \# iterations & Richards && Transport\\ [0.5ex]
 1/10 & - & 2.2435e+10  & - & 20.9641 && 3.2267e+08  & - & 20.9795 && 3.2267e+08 \\ 
 1/20 & - & 1.2309e+11   & - &  20.9642  && 5.3778e+08 & - & 20.9800 & &  5.3778e+08 \\
 1/40 & - & 4.5128e+11    & -  & 20.9644   && 1.2100e+09 & - & 20.9826  && 1.2100e+09  \\
 \hline \hline
 & L Scheme & & & L Scheme &  & & & L Scheme & & \\
 \hline
&  & & &  &  cond. \# & & &  & cond. \# &\\
\hline
dx &\# iterations & condition \#  & \# iterations & Richards && Transport  & \# iterations & Richards && Transport\\ [0.5ex]
1/10 & 10000 & 1.1768e+04  & 4000 & 5.000   &&  6.2544e+03 &  3000  & 5.000   && 6.2544e+03 \\ 
 1/20 & 26000 & 2.5237e+04 & 16000 &  5.000  && 1.4067e+04 &  6000 &   5.000   && 1.4067e+04  \\
 1/40 & -         & 5.1895e+04 & 44000 &  5.000 & &  2.8441e+04   & 12000  & 5.000 &  & 2.8440e+04 \\
 \hline\hline
 & Picard & & & Picard   & & & & Picard & &\\
 \hline
&  & & &  &  cond. \# & & &  & cond. \# &\\
\hline
dx &\# iterations & condition \#  & \# iterations & Richards && Transport  & \# iterations & Richards && Transport\\ [0.5ex]
 1/10 & - &4.4870e+10 &  -  &  20.9641 && 3.2267e+08 & - & 20.9726 && 3.2267e+08 \\
 1/20 & - & 2.4617e+11 &  - & 20.9642 &&   5.3778e+08& - & 20.9732 && 5.3778e+08\\
 1/40 & - & 9.0255e+11 & - & 20.9644&&  1.2100e+09 &  - &20.9752 & & 1.2100e+09\\
 \hline
\end{tabular}
}
\caption{Example 2B: variably saturated medium, fixed $\tau$= dx/100}
\label{tab:2.1dx}
\end{table}
\begin{table}[]
\centering
  \scalebox{.6}{
  \begin{tabular}{ c | c c | c c  c c  | c c c c}
\hline \hline
& Monolithic & &  & NonLinS &  & & & AltLinS &&\\
\hline
& Newton & & & Newton &  & & &Newton  & &\\
\hline
&  & & &  &  cond. \# & & &  & cond. \# &\\
\hline
$\tau$&\# iterations & condition \#  & \# iterations & Richards && Transport  & \# iterations & Richards && Transport\\ [0.5ex]
 1/1000 & - & 2.2435e+10& - & 20.9641 && 3.2267e+08 & - & 20.9795 && 3.2267e+08\\ 
 1/2000 & - &  2.2444e+10& - & 20.9641 && 6.4533e+08 & - & 20.9799 &&  6.4533e+08\\
 1/4000 & - &  2.2461e+10& 20000  & 20.9640 &&  1.2907e+09& - & 20.9807 && 1.2907e+09\\
 \hline \hline
 & L Scheme & & & L Scheme &  & & & L Scheme & & \\
 \hline
&  & & &  &  cond. \# & & &  & cond. \# &\\
\hline
$\tau$&\# iterations & condition \#  & \# iterations & Richards && Transport  & \# iterations & Richards && Transport\\ [0.5ex]
 1/1000 & 10000 & 1.1768e+04 & 4000 & 5.000 && 6.2544e+03 & 3000& 5.000   &&6.2544e+03 \\ 
 1/2000 & 14000 & 5.9591e+03 & 6000  & 5.000  &&  3.2036e+03 &  6000&5.000  && 3.2036e+03 \\
 1/4000 & 20000 & 3.0483e+03 &12000 & 5.000  && 1.6697e+03&  12000&5.000 &&  1.6697e+03 \\
 \hline\hline
 & Picard & & & Picard   & & & & Picard & &\\
 \hline
&  & & &  &  cond. \# & & &  & cond. \# &\\
\hline
$\tau$&\# iterations & condition \#  & \# iterations & Richards && Transport  & \# iterations & Richards && Transport\\ [0.5ex]
 1/1000 & - & 4.4870e+10& - & 20.9641 && 3.2267e+08 & - &20.9726 && 3.2267e+08 \\ 
 1/2000 & - & 2.4617e+11 & - & 20.9642&& 6.4533e+08 & - & 20.9731 &&  6.4533e+08\\
 1/4000 & - &  9.0255e+11& - & 20.9644 &&  1.2907e+09& - & 20.9739  &&1.2907e+09\\
 \hline
\end{tabular}
}
\caption{Example 2B: variably saturated medium, fixed dx=1/10}
\label{tab:2.1dt}
\end{table}

\subsection{\textcolor{blue}{Example 3: Highly heterogeneous porous medium}}
\paragraph*{}
\textcolor{blue}{
Physical porous media, such underground reservoirs, are characterized by highly heterogeneous properties. Here we investigate a domain $\Omega \subset \mathbb{R}^2$ with highly heterogeneous porosity and permeability as presented in Fig. \ref{fig:hetereogeneousrock}.
}

\textcolor{blue}{
We consider the problem already studied in Example 1B, using the same initial conditions and parameters. We set the external forces to be equal to zero so that the flow and transport are governed only by the initial and boundary conditions. The problem investigated is highly heterogeneous and, thanks to the initial pressure $p^0$, also variably saturated, presenting a discontinuity in the water content. Due to the low conductivity, Fig. \ref{fig:conductivity}, a large time interval is considered, $T=10^5$. 
}

\textcolor{blue}{
We can observe, in Tables \ref{tab:3dx} and \ref{tab:3dt} and in Fig. \ref{itheterogeneous}, the total numbers of iterations required by each solving algorithm and the condition numbers of the associated linearized systems. For this particular problem, it is interesting to notice that, even tough we are investigating a large time domain, the L-schemes converges for a time step $\tau=T/10$. Considering the results presented in the previous examples, where for smaller time steps all the schemes converged, we tested also $\tau = [T/10^2,T/10^3,T/10^4]$. Anyhow, due to the highly complex domain, both Newton method and modified Picard did not converge. We did not test smaller time steps because the resulting number of iterations would have been much larger than the one obtained with the original $\tau$ and the L-scheme.
}

\begin{figure}[]
\begin{center}
  \begin{subfigure}{.49\textwidth}  
  \includegraphics[width=1\linewidth]{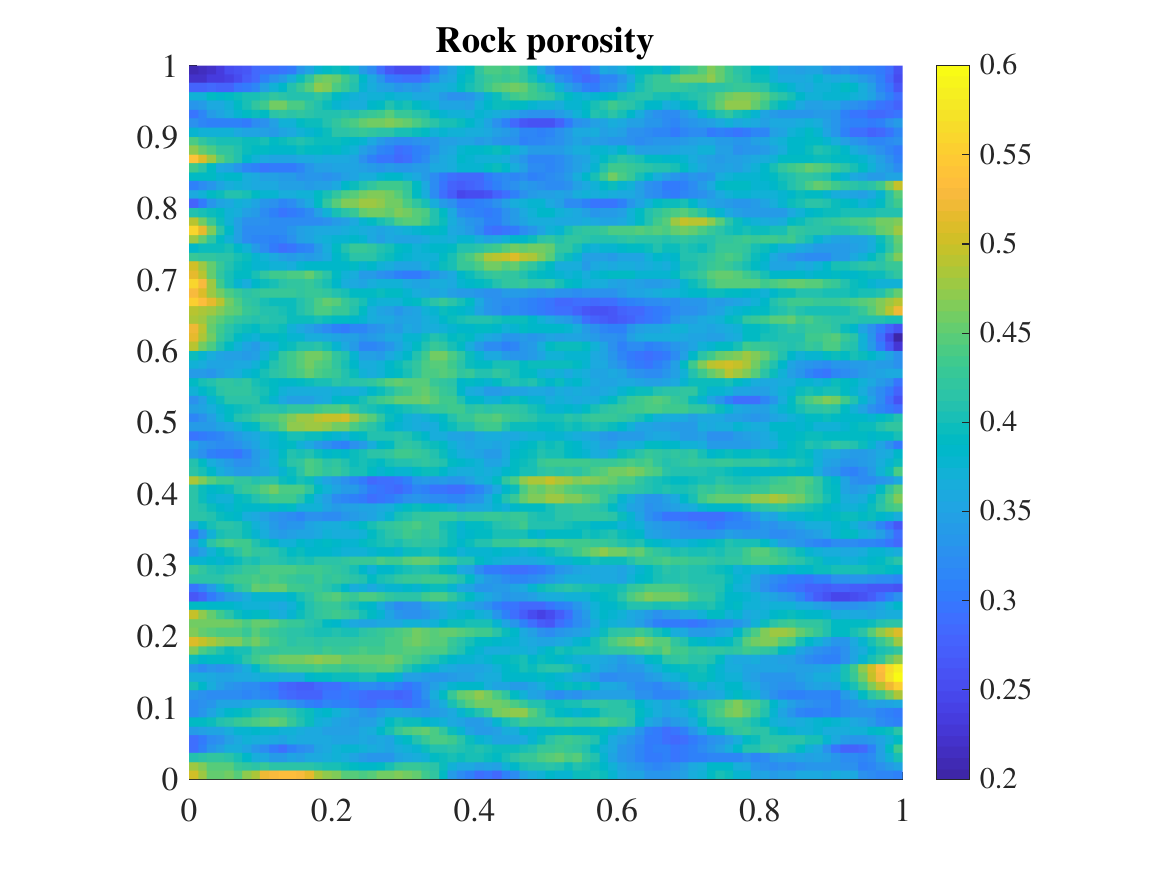}
  \caption{Porosity of the domain $\Omega$}
  \label{fig:porosity}
\end{subfigure}
  \begin{subfigure}{.49\textwidth}
  \includegraphics[width=1\linewidth]{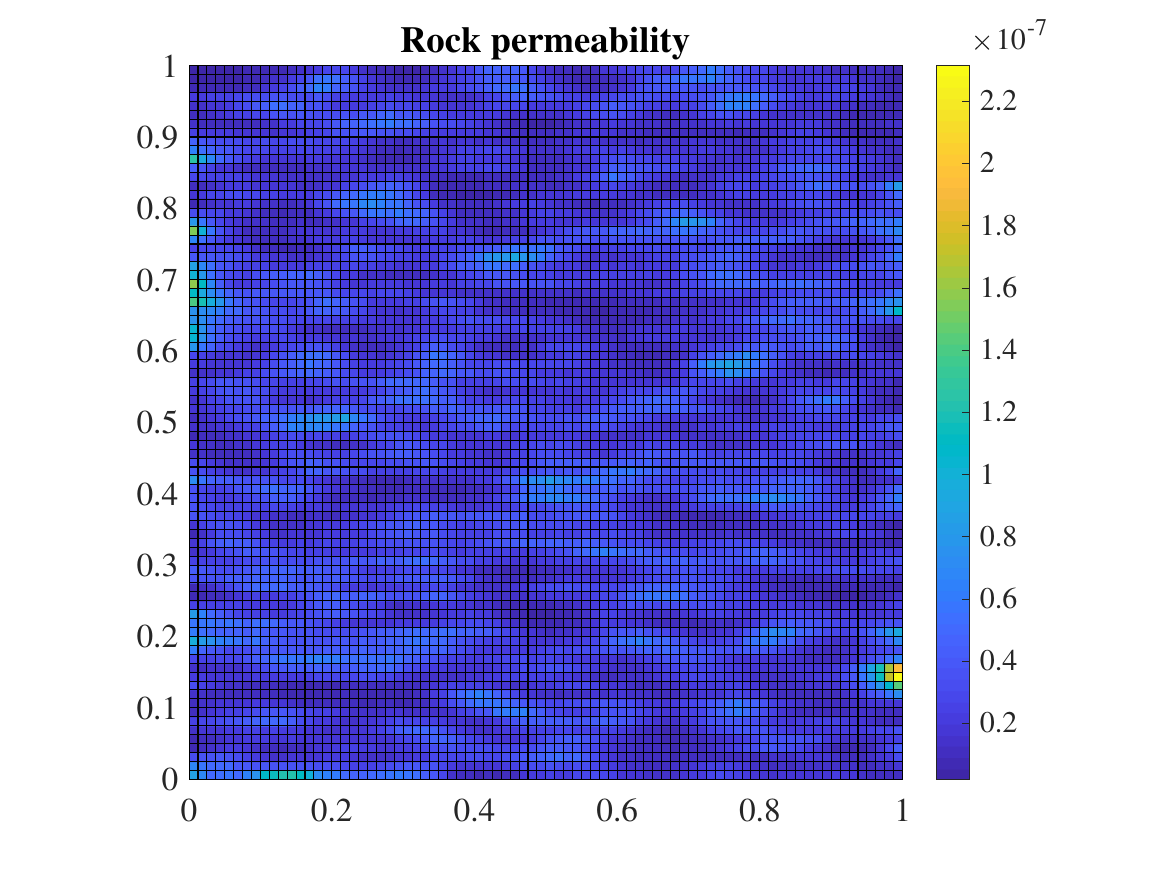}
  \caption{Permeability of the domain $\Omega$}
  \label{fig:conductivity}
\end{subfigure}
\caption{Example 3: highly heterogeneous domain}
\label{fig:hetereogeneousrock}
\end{center}
\end{figure}

\begin{figure}[]
\centering
  \includegraphics[scale =.4]{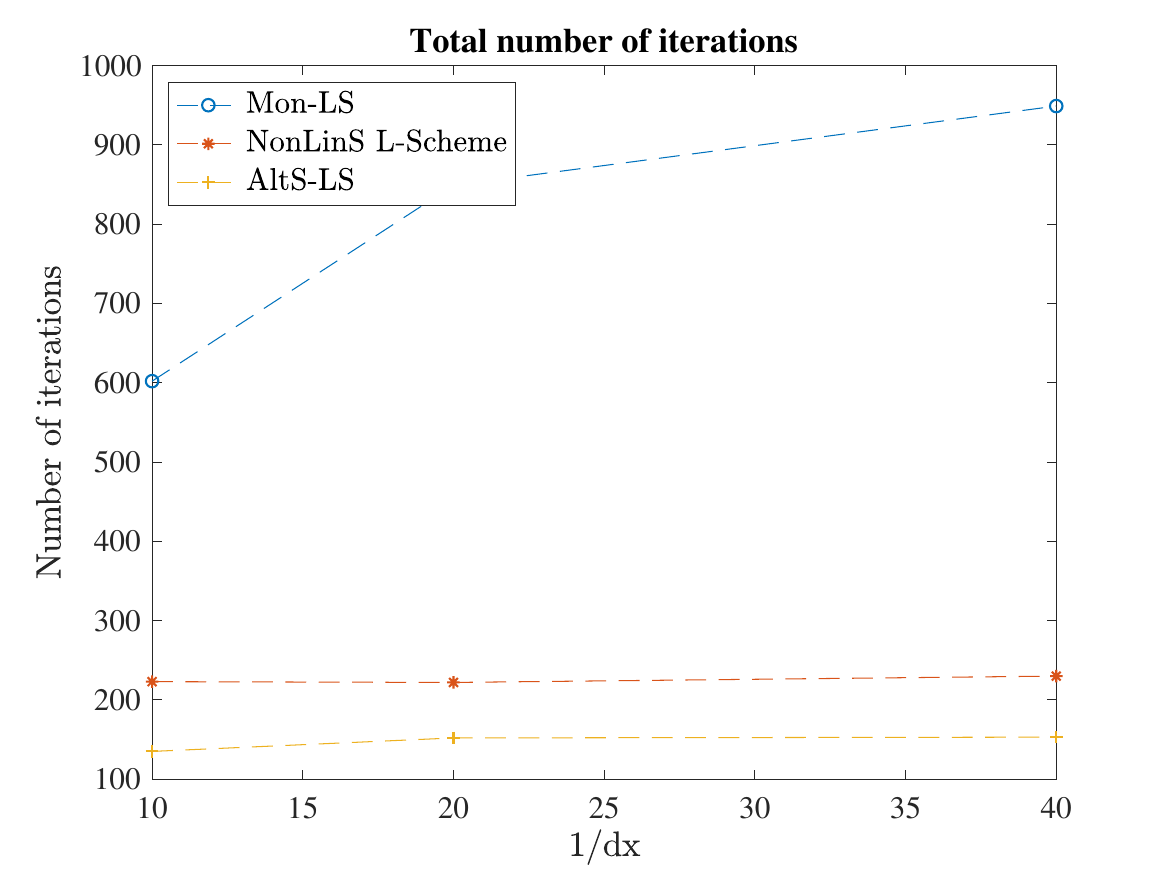}
  \caption{Example 3: total number of iterations for different algorithms}
  \label{itheterogeneous}
\end{figure}

\begin{table}[]
\centering
  \scalebox{.6}{
  \begin{tabular}{ c | c c | c c  c c  | c c c c}
\hline \hline
& Monolithic & &  & NonLinS &  & & & AltLinS &&\\
\hline
& Newton & & & Newton &  & & &Newton  & &\\
\hline
&  & & &  &  cond. \# & & &  & cond. \# &\\
\hline
dx &\# iterations & condition \#  & \# iterations & Richards && Transport  & \# iterations & Richards && Transport\\ [0.5ex]
 1/10 & - & 1.7934e+09  & - & 240.1589 && 6.6555e+07  & - &240.1589  &&  6.6555e+07\\ 
 1/20 & - &  3.1287e+09  & - &  1.2146e+03  && 2.6668e+08 & - &1.2146e+03  & & 2.6668e+08  \\
 1/40 & - & 4.7011e+10  & -  &  3.3058e+03  && 1.0721e+09 & - &3.3058e+03   && 1.0721e+09  \\
 \hline \hline
 & L Scheme & & & L Scheme &  & & & L Scheme & & \\
 \hline
&  & & &  &  cond. \# & & &  & cond. \# &\\
\hline
dx &\# iterations & condition \#  & \# iterations & Richards && Transport  & \# iterations & Richards && Transport\\ [0.5ex]
1/10 &602  & 3.0502e+07 & 223 & 13.3371 && 1.8192e+07  &  135& 13.3371 && 1.8192e+07 \\ 
 1/20 & 849 & 1.0614e+08  & 222 & 47.4216   &&  6.3261e+07 &152  &  47.4216   && 6.3261e+07  \\
 1/40 & 949  & 4.2918e+08 &230  &224.9096   & & 2.9895e+08   & 153  & 224.9096 &&  2.9895e+08\\
 \hline\hline
 & Picard & & & Picard   & & & & Picard & &\\
 \hline
&  & & &  &  cond. \# & & &  & cond. \# &\\
\hline
dx &\# iterations & condition \#  & \# iterations & Richards && Transport  & \# iterations & Richards && Transport\\ [0.5ex]
 1/10 & - & 1.5365e+09 &  -  & 308.0140  && 2.1527e+07 & - & 308.0140 && 2.1527e+07 \\
 1/20 & - & 5.4220e+09 &  - & 946.9821  &&  9.3559e+07 & - & 946.9821 &&9.3559e+07  \\
 1/40 & - & 4.0185e+10  & - & 4.3966e+03  && 3.5695e+08  &  - &4.3966e+03 & & 3.5695e+08\\
 \hline
\end{tabular}
}
\caption{Example 3: heterogeneous variably saturated medium, fixed $\tau$= T/10}
\label{tab:3dx}
\end{table}

\begin{table}[]
\centering
  \scalebox{.6}{
  \begin{tabular}{ c | c c | c c  c c  | c c c c}
\hline \hline
& Monolithic & &  & NonLinS &  & & & AltLinS &&\\
\hline
& Newton & & & Newton &  & & &Newton  & &\\
\hline
&  & & &  &  cond. \# & & &  & cond. \# &\\
\hline
$\tau$ &\# iterations & condition \#  & \# iterations & Richards && Transport  & \# iterations & Richards && Transport\\ [0.5ex]
 T/$10$ & - & 1.7934e+09   & - &240.1589 && 6.6555e+07  & - &240.1589  &&  6.6555e+07\\ 
 T/$10^2$ & - & 6.7021e+08   & - &  483.2030  && 6.6082e+06  & - & 483.2030 & & 6.6082e+06  \\
 T/$10^3$ & - & 4.6611e+08 & -  &  3.4913e+03  && 6.6008e+05 & - & 3.4913e+03  && 6.6008e+05  \\
 \hline \hline
 & L Scheme & & & L Scheme &  & & & L Scheme & & \\
 \hline
&  & & &  &  cond. \# & & &  & cond. \# &\\
\hline
$\tau$ &\# iterations & condition \#  & \# iterations & Richards && Transport  & \# iterations & Richards && Transport\\ [0.5ex]
T/$10$ &602  & 3.0502e+07 & 223 & 13.3371 && 1.8192e+07  &  135& 13.3371 && 1.8192e+07 \\ 
 T/$10^2$ &6880  & 3.0503e+06  &2350 &   2.3236 && 1.6678e+06  & 1431  &   2.3236  && 1.6678e+06 \\
 T/$10^3$ & 23188  & 3.0528e+05 & 26890 & 1.5385  & &  1.5358e+05  & 7950  &1.5385  && 1.5358e+05 \\
 \hline\hline
 & Picard & & & Picard   & & & & Picard & &\\
 \hline
&  & & &  &  cond. \# & & &  & cond. \# &\\
\hline
$\tau$ &\# iterations & condition \#  & \# iterations & Richards && Transport  & \# iterations & Richards && Transport\\ [0.5ex]
 T/$10$ & - & 1.5365e+09 &  -  & 308.0140  && 2.1527e+07 & - & 308.0140 && 2.1527e+07 \\
 T/$10^2$ & - & 8.1784e+09 &  - & 341.0311  && 2.5362e+06  & - &341.0311  &&2.5362e+06 \\
 T/$10^3$ & - & 1.0572e+10   & - & 366.8343  && 2.8550e+05  &  - & 366.8343& &2.8550e+05 \\
 \hline
\end{tabular}
}
\caption{Example 3: heterogeneous variably saturated medium, fixed dx= 1/10}
\label{tab:3dt}
\end{table}

\newpage
\section{Conclusions}\label{conclusion}


\paragraph{}
In this paper we considered surfactant transport in variably saturated porous media. The water flow and the transport are in this case fully coupled. Three linearization techniques were considered: the Newton method, the modified Picard and the L-scheme. Based on these, monolithic and splitting schemes were designed, analyzed and numerically tested. We conclude that the only quadratic convergent scheme is the monolithic Newton, that the L-scheme based solvers are the most robust ones and produce well-conditioned linear systems and that the alternative schemes are faster than the classical splitting approaches.

\textcolor{blue}{
Although we recognized the existence of improved Newton solvers, e.g. \cite{jenny2009, lee2016,wang2013,younis2010}, we believe that, also due to its extreme simplicity, the L-scheme is a powerful tool and can be particularly useful in degenerative cases here investigated.
}

\begin{acknowledgments}
The research of D. Illiano was funded by VISTA, a collaboration between the Norwegian Academy of Science and Letters and Equinor, project number 6367, project name: adaptive model and solver simulation of enhanced oil recovery. The research
of I.S. Pop was supported by the Research Foundation-Flanders (FWO), Belgium through the Odysseus programme (project G0G1316N) and Equinor through the Akademia grant. 
\\We thank our colleagues from the \emph{Sintef} research group. In particular Olav Moyner PhD, for assistance with the implementation of the numerical examples in \emph{MRST}, the toolbox based on Matlab developed at \emph{Sintef} itself.
\end{acknowledgments}

\bibliographystyle{spmpsci}      


\end{document}